\newcommand{\R}{\mathbb{R}}
\newcommand{\vt}[1]{\tilde{\vb{#1}}}
\DeclareMathOperator*{\argmin}{arg\,min}
\DeclareMathOperator{\spn}{span}
\newcommand{\T}{\mathsf{T}}
\DeclareMathOperator{\diag}{diag}
\title{Locally Feasibly Projected Sequential Quadratic Programming for Nonlinear Programming on Arbitrary Smooth Constraint Manifolds\thanks{Submitted November 4, 2021
\funding{K.S.S. was supported by the U.S. Department of Energy, Office of Science, Office of Advanced Scientific Computing Research, Department of Energy Computational Science Graduate Fellowship under Award Number DE-FG02-97ER25308. This report was prepared as an account of work sponsored by an agency of the United States Government. Neither the United States Government nor any agency thereof, nor any of their employees, makes any warranty, express or implied, or assumes any legal liability or responsibility for the accuracy, completeness, or usefulness of any information, apparatus, product, or process disclosed, or represents that its use would not infringe privately owned rights. Reference herein to any specific commercial product, process, or service by trade name, trademark, manufacturer, or otherwise does not necessarily constitute or imply its endorsement, recommendation, or favoring by the United States Government or any agency thereof. The views and opinions of authors expressed herein do not necessarily state or reflect those of the United States Government or any agency thereof.}}}
\author{Kevin S. Silmore\thanks{Department of Chemical Engineering, Massachusetts Institute of Technology, Cambridge, MA, 02139 (\email{silmore@mit.edu}, \email{jswan@mit.edu})} \and James W. Swan\footnotemark[2]}
\begin{document}
\maketitle

\begin{abstract}
High-dimensional nonlinear optimization problems subject to nonlinear constraints can appear in several contexts including constrained physical and dynamical systems, statistical estimation, and other numerical models. Feasible optimization routines can sometimes be valuable if the objective function is only defined on the feasible set or if numerical difficulties associated with merit functions or infeasible termination arise during the use of infeasible optimization routines. Drawing on the Riemannian optimization and sequential quadratic programming literature, a practical algorithm is constructed to conduct feasible optimization on arbitrary implicitly defined constraint manifolds. Specifically, with $n$ (potentially bound-constrained) variables and $m < n$ nonlinear constraints, each outer optimization loop iteration involves a single $\order{nm^2}$-flop factorization, and computationally efficient retractions are constructed that involve $\order{nm}$-flop inner loop iterations. A package, LFPSQP.jl, is created using the Julia language that takes advantage of automatic differentiation and projected conjugate gradient methods for use in inexact/truncated Newton steps.
\end{abstract}

\section{Introduction} We consider nonlinear constrained optimization problems of the form
\begin{equation}
\begin{array}{rl}
\displaystyle \min_{\vb x \in \R^n} & f(\vb x) \\
\mathrm{s.t.} & \vb c(\vb x) = \bm 0 \\
& \vb d^l \leq \vb d(\vb x) \leq \vb d^u\\
& \vb x^l \leq \vb x \leq \vb x^u
\end{array}
\label{eq:mixed_opt}
\end{equation}
where $f: \R^n \to \R$, $\vb c: \R^n \to \R^m$, and $\vb d: \R^n \to \R^p$ are smooth functions (i.e., of class $C^2$). Furthermore, $\vb x^l$ and $\vb x^u$ are constant box constraints on the variables, and $\vb d^l$ and $\vb d^u$ are constant bounds for the inequality constraints. The box constraints on $\vb x$ are denoted separately from those associated with $\vb d$ since they can be handled within this work in an especially efficient manner, as will be discussed. We are primarily interested in the case where $n$ is very large and $m + p$ is small compared to $n$. Such high-dimensional and/or nonlinear problems are often solved using \textit{infeasible} primal-dual interior-point methods or sequential quadratic programming methods \cite{nocedal2006, bertsekas2016}, but there are certain advantages possessed by methods that only produce feasible iterates.

First, maintaining feasibility at all steps obviates the need to use a specialized merit function; the objective function itself can be used as a merit function. Second, if early termination is desired, the final iterate returned is feasible. This is especially useful for applications in model predictive control \cite{cao2017, dontchev2019, sun2016, tenny2004}, systems engineering, and others, where time and/or computational constraints may prevent one from solving the optimization problem completely. Third, certain objective functions may only be defined when the constraints are satisfied (at least to a certain tolerance). Taking feasible steps, of course, ensures that objective function evaluations are always well defined. For example, many physical problems feature expressions with variables that are only well defined for nonnegative values and such a feasible optimization routine would eliminate practical issues experienced when evaluating such expressions at unphysical values. Finally, regarding models of physical systems, one potential benefit of feasible optimization routines not mentioned in the literature is that local minima found when starting from certain initial conditions may better correspond to those found in reality. That is, the path taken from initial iterates to resulting local optima may be more ``physical'' in nature. Admittedly, this claim lacks certain rigor, but particular problems featuring many local optima (e.g., folding of small chain molecules, energy minimization, structural optimization, etc.) may benefit from feasible optimization routines. Such practical utility may be explored more thoroughly in future work.

When the feasible set satisfying the imposed constraints represents a submanifold of Euclidean space (discussed more below), one may turn to Riemannian optimization methods and the associated rich body of literature on these methods as a possible feasible optimization framework. Early works from Gabay \cite{gabay1982}, Luenberger \cite{luenberger1972}, and others established the utility of Riemannian optimization and specifically Riemannian Newton methods. Convergence results for Riemannian Newton methods as well as a myriad of other relevant ideas from the field of manifold optimization can be found in the monograph of Absil, Mahony, and Sepulchre \cite{absil2009a}. In particular, like its traditional Euclidean counterpart, the Riemannian Newton method exhibits quadratic convergence under standard assumptions. In fact, certain manifold-based problems (e.g., the Rayleigh quotient problem) enjoy better convergence (i.e., cubic) as shown by Smith \cite{smith1994}. 

Parallel to the development of manifold-based optimization methods, numerous researchers over the past several decades have established algorithms that also maintain feasibility at all iterates within the realm of sequential quadratic programming (SQP). In particular, Lawrence and Tits \cite{lawrence2001} established a feasible sequential quadratic programming method (FSQP) that guarantees feasibility for linear constraints but not necessarily \textit{nonlinear} constraints. Later, Wright and Tenny \cite{wright2004} developed a general framework for feasibility perturbed sequential quadratic programming (FP-SQP) that relies on a so-called ``asymptotic exactness'' property for feasibility maintenance akin to the properties of \textit{retractions} from the manifold optimization literature. A notable work from Absil et al. \cite{absil2009} demonstrated the equivalence of the Riemannian Newton and FP-SQP methods given a specific choice of Lagrange multipliers at each step of the routine (viz., the coefficients of the projection of the objective function gradient onto the normal space of the constraints).

Besides those mentioned above, several general packages \cite{manopt, Manoptjl} and other algorithms \cite{adler2002, huper2004, smith1994} have been developed to perform manifold-based optimization for various applications. However, as with FP-SQP, these algorithms often rely on the imposition of specific manifolds (e.g., spheres, Stiefel manifolds, positive definite matrices, etc.) and take advantage of their special mathematical structure. In fact, it is often considered the case that methods to maintain feasibility for arbitrary constraints lacking \textit{a priori} known structure (and consequently the need to solve complicated ``inner'' problems to do so) are impractical \cite{wright2004}. In this work, we aim to dispel this notion, combining several state-of-the-art numerical techniques to develop a general computational framework for nonlinear optimization problems with largely arbitrary implicit constraints.

The primary developments of this work include the construction of practical and computationally efficient retraction methods for arbitrary implicit constraints (i.e., those that do not possess special structure that can be exploited) as well as the use of truncated Newton methods to propose feasible step directions that avoid direct calculation of Hessian matrices. In fact, as will be discussed, as long as the feasible set remains a smooth submanifold (and, empirically, even sometimes when it does not), the method can handle linearly dependent, or degenerate, constraints that break typical Karush-Kuhn-Tucker (KKT) constraint qualifications. For the strictly equality constrained problem, each step is dominated by a factorization requiring $\order{nm^2}$ flops followed by ``inner'' iterative methods requiring $\order{nm}$ flops at each inner step. It is this factorization and associated \textit{local} parameterization of the normal space at each step that underlies the name chosen for this work: Locally Feasibly Projected Sequential Quadratic Programming (LFPSQP). Crucially, for the mixed equality and inequality constrained problem, we are able to maintain the same asymptotic scaling for both factorization and inner iterative method steps with respect to the total number of constraints regardless of the number of nontrivial box constraints imposed. Finally, we explore the use of truncated Newton methods that take advantage of mixed-mode automatic differentiation in the Julia language, thereby avoiding explicit construction of Hessian matrices. The use of the \textit{projected} conjugate gradient method \cite{gould2001, nocedal2006} specifically guarantees that proposal directions lie in the tangent space of the constraint manifold at any imposed termination residual tolerance. A software package in Julia, named \texttt{LFPSQP.jl}, is created to implement these ideas \cite{silmorelfpsqp}. To our knowledge, all of these features are not currently available in an existing code and should prove to be useful for applications that may benefit from feasible iterates and the lack of requirement to specify explicit gradients, Jacobians, or Hessian matrices.

\section{Background}

Given a set of equality constraints, denoted ``$\vb c(\vb x) = \bm 0$'', where $\vb c : \R^n \to \R^m$ is a differentiable map (assumed to be of class $C^2$ throughout), a basic result in differential geometry states that a sufficient condition for the set $\mathcal M = \{ \vb x \mid \vb c(\vb x) = \bm 0 \}$ to be a submanifold of $\R^n$ is that the Jacobian, $\vb J$, of $\vb c$ be full-rank at all points $\vb x \in \mathcal M$. Furthermore, as a submanifold of Euclidean space, the typical Euclidean inner product can be employed as the Riemannian metric, which will be assumed throughout this work. Given the assumption that $\mathcal M$ is indeed a submanifold, techniques from Riemannian optimization can be used to construct a globally convergent optimization algorithm to minimize a smooth function $f: \mathcal M \to \R$ defined on the constraint set, or feasible set, $\mathcal M$. Following Absil et al. \cite{absil2009a}, such an optimization routine in general relies on the iterative generation of a ``suitable'' (i.e., gradient-related) step tangent to the constraint manifold followed by a ``retraction'' back onto the manifold in a way that guarantees sufficient decrease of the function, such as with a line search (employed in this work) or a trust region procedure \cite{absil2007}. This skeleton of an algorithm is shown as Algorithm \ref{alg:main}, where \textsc{LineSearch} is a given line search routine that depends on the objective function and a function that maps ``step length'' to a new proposal. In this work, because $\mathcal M$ is always assumed to be a submanifold of Euclidean space, the tangent space of $\mathcal M$ at a particular point $\vb x$, denoted $T_{\vb x} \mathcal M$, can be identified with the linear subspace $\{ \vb v \in \R^n \mid \vb J(\vb x) \vb v = \bm 0 \}$. Geometrically, when translated to the point on $\mathcal M$ at which it is associated, this space can be visualized as the tangent hyperplane to the feasible set at $\vb x$ and can be identified with $\mathcal M$ itself in the special case that $\vb c$ is an affine map. As discussed more below, search directions will be generated either via a projected gradient or an inexact Newton scheme that employs second-order information of the objective function and the constraints.

\begin{algorithm}
\caption{Optimization outer loop}
\label{alg:main}
\begin{algorithmic}[1]
\Procedure{Optimize}{$\vb x^0$, $f$, $R$}
\State $i \gets 0$
\While{Termination criterion unsatisfied}
\State Generate suitable search direction $\Delta \vb x \in T_{\vb x^i} \mathcal M$
\State Line search for a new iterate: $\vb x^{i+1} \gets \textsc{LineSearch}(f, \alpha \mapsto R_{\vb x^i}(\alpha \Delta \vb x))$
\State $i \gets i + 1$
\EndWhile
\State \textbf{return} $\vb x^{i}$ 
\EndProcedure
\end{algorithmic}
\end{algorithm}

The termination criteria used by LFPSQP include a tolerance for the change in objective function value between outer steps, a tolerance for the change in the Euclidean distance between iterates, a tolerance for the norm of the projected gradient (essentially the norm of the typical Lagrangian used in SQP routines), and a maximum allowable number of iterations. All of these tolerances can be adjusted by the user.

The retraction map \cite{absil2009a} is the fundamental mechanism in the optimization routine responsible for maintaining feasibility at all iterates.
\begin{definition}
A \emph{retraction} is a map, $R_{\vb x}: T_{\vb x} \mathcal M \to \mathcal M$ such that $R_{\vb x}(\bm 0) = \vb x$ and that for every $\vb v \in T_{\vb x} \mathcal M$, $\left. \dv{t} R_{\vb x}(t \vb v) \right|_{t=0} = \vb v$.
\end{definition}
The last condition guarantees that the retraction agrees locally with the constraint manifold up to first-order. A second-order retraction is defined analogously.
\begin{definition}
A retraction is \emph{second-order} if $\left. \dv[2]{t} R_{\vb x}(t \vb v) \right|_{t=0} \in N_{\vb x} \mathcal M$ for all $\vb v \in T_{\vb x} \mathcal M$.
\end{definition}
Here, $N_{\vb x} \mathcal M$ is the normal space of the constraint manifold $\mathcal M$ at a point $\vb x$ and can be identified with the linear subspace $\{\vb v \in \R^n \mid \vb v \in \mathrm{im}(\vb J^\T(\vb x)) \}$, where $\mathrm{im}(\vb J^\T(\vb x))$ is the image, or column space, of $\vb J^\T(\vb x)$. Together, $T_{\vb x} \mathcal M$ and $N_{\vb x} \mathcal M$ span all of $\R^n$. Second-order retractions agree with the Riemannian exponential up to second-order in the distance from the zero vector of the tangent space, $\bm 0_{\vb x}$ \cite{absil2012}. In the following sections, we will describe the construction of two computationally efficient routines for second-order retractions that take advantage of a local parameterization of the normal space at each iterate \cite{absil2012, oustry1999}. It is thus important to note that retractions in general, and especially those considered in this work, may \textit{not} necessarily be well-defined on the entirety of $T_{\vb x} \mathcal M$ but rather within a certain neighborhood of $\bm 0_{\vb x}$.

The gradient of a function defined on a Riemannian manifold can be defined intrinsically with respect to the Riemannian metric \cite[ch. 3]{absil2009a}.
\begin{definition}
The \emph{gradient} of a smooth function $f : \mathcal M \to \R$ at a point $\vb x$ is the unique element $\mathrm{grad}\, f(x) \in T_{\vb x} \mathcal M$ satisfying $\langle \mathrm{grad}\, f(x), \vb v \rangle = \mathrm{D}f(x)[\vb v]$ for all $\vb v \in T_{\vb x} \mathcal M$, where $\mathrm{D}f(x)$ is the differential of $f$ at $\vb x$.
\end{definition}
For submanifolds of Euclidean space employing the typical Euclidean metric, the gradient of $f$ can be identified as the projection of the Euclidean gradient (denoted $\grad f$ and not to be confused with the Levi-Civita connection) onto the tangent space of $\mathcal M$ at $\vb x$. Likewise, the Riemannian Hessian \cite[ch. 5]{absil2009a} for a submanifold of Euclidean space can be defined similarly with projections involving the Euclidean Hessian of $f$, $\grad \grad f$, as well as the Hessians of the constraint functions, which, together, account for variation of $f$ in the tangent space and variation of $f$ due to curvature of the constraints.

\section{Equality Constraints}

In this section, we are only concerned with presence of equality constraints in an optimization problem of the form:
\begin{equation}
\begin{array}{rl}
\displaystyle \min_{\vb x \in \R^n} & f(\vb x) \\
\mathrm{s.t.} & \vb c(\vb x) = \bm 0.
\end{array}
\label{eq:opt_equality}
\end{equation}
The addition of inequality constraints will be considered in the next section. Again, it is assumed that $f$ and $\vb c$ are smooth maps and that the set $\mathcal M = \{ \vb x \mid \vb c(\vb x) = \bm 0 \}$ is a submanifold of Euclidean space.

\subsection{Automatic differentiation}

As detailed more thoroughly in the following subsections, generating a search direction requires access to the gradient of the objective function, $\grad f$, the Jacobian of the constraints, $\vb J$, and the \textit{action} of the Hessian of the Lagrangian (optional if the inclusion of second-order information is not desired). While the user can code these functions explicitly and pass them to the main LFPSQP routine, they are calculated by default using automatic differentiation. With a computer language that allows for it, automatic differentiation of functions has proven to be a very effective tool for numerical optimization routines \cite{griewank2008}, eliminating the need to calculate derivatives by hand (and possible human errors associated with it) or the use of lower accuracy finite difference approximations. In this work, we take advantage of the rich software ecosystem in Julia, employing reverse-mode automatic differentiation with the Julia package ReverseDiff.jl \cite{revels} as well as mixed-mode automatic differentiation using a combination of RerverseDiff.jl and ForwardDiff.jl \cite{revels2016}. Specifically, $\grad f(\vb x)$ and $\vb J(\vb x)$ are calculated using reverse-mode automatic differentiation since reverse-mode is often faster in the case of functions with many inputs and few outputs (i.e., when $n$ is much larger than $m$, which is assumed to be the case here). The Lagrangian function associated with the optimization problem \ref{eq:opt_equality} is defined as
\begin{equation}
\mathcal L(\vb x, \bm \lambda) = f + \bm \lambda \vdot \vb c(\vb x).
\end{equation}
The Hessian of the Lagrangian at a given pair, $(\vb x, \bm \lambda)$, is the matrix:
\begin{equation}
\vb W(\vb x, \bm \lambda) = \grad \grad f(\vb x) + \sum_{k=1}^m \lambda_k \grad \grad c_k(\vb x),
\end{equation}
where ``$\grad$'' denotes a derivative with respect to $\vb x$ and $c_k$ is the $k$-th element of $\vb c$. The action, $\vb W(\vb x, \bm \lambda) \vb v$, for some vector $\vb v \in \R^n$, can be written as
\begin{equation}
\vb W(\vb x, \bm \lambda) \vb v = \left. \dv{a} \left( \grad f(\vb x + a \vb v) + \vb J^\T(\vb x + a \vb v) \bm \lambda \right) \right|_{a=0}.
\end{equation}
Thus, mixed-mode differentiation --- reverse-mode to calculate the gradient of $f$ and Jacobian of $\vb c$, and forward-mode to calculate the derivative of those functions with respect to $a$ --- can be used to calculate the action $\vb W(\vb x, \bm \lambda) \vb v$. Practically, this scheme is possible because ReverseDiff can calculate derivatives of functions with respect to ``dual'' input types used by ForwardDiff and not just primitive floating point inputs.

\subsection{Step Generation and Line Search}

At each iteration $i$ of the outer optimization loop, the Jacobian of the constraints, $\vb J(\vb x^i)$, is factored in order to construct a local orthogonal basis for the normal space, $N_{\vb x^i} \mathcal M$. We choose to use a \textit{thin} singular value decomposition (SVD) for such a factorization due to its rank-revealing nature \cite{golub2013} such that
\begin{equation}
\vb U \bm \Sigma \vb V^\T = \vb J^\T(\vb x^i),
\end{equation}
where $\vb U \in \R^{n \times m}$ and $\vb V \in \R^{m \times m}$ are matrices with orthonormal columns and $\bm \Sigma$ is a diagonal matrix. In general, this decomposition requires $\order{n m^2}$ flops to compute. The dependence of the factorization on the $i$th iterate is implied and will be assumed throughout. One could also consider another decomposition, such as a rank-revealing QR decomposition, but we ultimately use the SVD given that it is more numerically stable and does not take significantly more time to compute than a rank-revealing QR decomposition in practice, especially when $m \ll n$. This decomposition, though, is essentially a specific construction of the tangential parameterizations considered by Absil et al. \cite{absil2012} and Oustry \cite{oustry1999}. As $m$ grows and is commensurate to $n$ in size, such a decomposition may not be practical to compute, especially if $\vb J$ is sparse, as is often the case for many problems encountered in practice. Although not considered in this work, potential methods to take advantage of the sparsity of $\vb J$ are discussed below. 

In the event that some constraints are not linearly independent, given a user-specified tolerance, $\epsilon_\mathrm{rank}$, the numerical rank of $\vb J$, $\rank_{\epsilon_\mathrm{rank}}(\vb J)$, is defined as the number of singular values greater than $\epsilon_\mathrm{rank}$. The first $\rank_{\epsilon_\mathrm{rank}}(\vb J)$ columns of $\vb U$, then, provide a basis for the normal space of the constraint manifold (assuming the feasible set is still indeed a submanifold despite the degeneracy). Thus, given the SVD of $\vb J(\vb x^i)$, the projected gradient of $f$ onto the tangent space at $\vb x^i$ (equal to the Riemannian gradient of $f$) can be easily calculated as
\begin{equation}
\mathrm{grad}\, f(\vb x^i) = P_{\vb x^i}[\grad f(\vb x^i)] = (\vb I - \vb U_{:,1:r} \vb U_{:,1:r}^\T)\grad f(\vb x^i),
\end{equation}
where $P_{\vb x^i}$ is the projection operator onto the tangent space at $\vb x^i$, $\vb I$ is the identity matrix, and $\vb U_{:,1:r}$ represents the first $r = \rank_{\epsilon_\mathrm{rank}}(\vb J)$ columns of $\vb U$. This projection operation with $\vb U$ only requires $\order{nm}$ flops. A sketch of the algorithm to generate a gradient search direction is shown in Algorithm \ref{alg:gradient} and can be used in Algorithm \ref{alg:main} along with a suitable line search.

\begin{algorithm}
\caption{Generation of the gradient search direction}
\label{alg:gradient}
\begin{algorithmic}[1]
\Procedure{GenerateGradientDirection}{$\vb x^i$, $f$, $\vb J$}

\State Generate SVD decomposition of $\vb J^\T(\vb x^i)$
\State $\Delta \vb x \gets P_{\vb x^i}[-\grad f(\vb x^i)]$

\State \textbf{return} $\Delta \vb x$

\EndProcedure
\end{algorithmic}
\end{algorithm}

For line searches, we implement an Armijo backtracking line search as well as McCormick's ``exact'' golden ratio bisection line search \cite[sec. 5.4]{mccormick1983}. The Armijo backtracking line search finds the minimum integer, $k \geq 0$, such that
\begin{equation}
\begin{split}
f(\vb x^i) - f(R_{\vb x^i}(\alpha_0 s^k \Delta \vb x)) &\geq -\sigma \alpha_0 s^k  P_{\vb x^i}[\grad f(\vb x^i)] \vdot \Delta \vb x \\
&= -\sigma \alpha_0 s^k  \grad f(\vb x^i) \vdot \Delta \vb x,
\end{split}
\end{equation}
where $\alpha_0 > 0$, $s \in (0, 1)$, and $\sigma \in (0, 1)$ are user-specified parameters representing an initial step length guess, a reduction factor, and a suitable criterion for sufficient decrease, respectively. The second equality follows from the fact that $\Delta \vb x$ lies in the tangent space of the constraint manifold such that
\begin{equation*}
P_{\vb x^i}[\grad f(\vb x^i)] \vdot \Delta \vb x = \grad f(\vb x^i) \vdot P_{\vb x^i}[\Delta \vb x] = \grad f(\vb x^i) \vdot \Delta \vb x.
\end{equation*}
Values that are practical for many problems are $\alpha_0 = 1$, $s = 1/2$, and $\sigma = 10^{-4}$.

McCormick's bisection procedure \cite[sec. 5.4]{mccormick1983} is robust and is guaranteed to yield a local minimizer of $f$ along the ``arc'' $\alpha \mapsto R_{\vb x^i}(\alpha \Delta \vb x)$ as long as an optimum for $\alpha$ exists or if the range of $\alpha$ over which the retraction $R_{\vb x^i}(\alpha \Delta \vb x)$ is defined is bounded. Unlike typical bisection procedures over a given interval, McCormick's bisection search features an upper bounding procedure such that no \textit{a priori} specified search interval for $\alpha$ is required. The convergence criterion implemented in the package we develop bisects the interval until the distance between the two endpoints is less than $10^{-6} \norm{\Delta \vb x}_2$ and then selects the endpoint associated with the greatest decrease in $f$. While this exact line search procedure often requires many more function evaluations than Armijo backtracking, it may be desirable for certain ill-conditioned objective functions for which Armijo backtracking leads to inefficient stepping.

Together, this gradient search direction along with either of these two line search procedures yields a globally convergent algorithm if the additional standard assumption that the set $\{ \vb x \in \mathcal M \mid f(\vb x) \leq f(\vb x^0) \}$ is bounded holds \cite[corollary 4.3.2]{absil2009a}.

Given that $\mathcal M$ is a submanifold of Euclidean space, the \textit{intrinsic} Riemannian Newton search direction can be calculated by solving the following (\textit{extrinsic}) linear saddle point system \cite{absil2009}:
\begin{equation*}
\mqty[\vb W(\vb x^i, \bm \lambda^i) & \vb J^\T(\vb x^i) \\ \vb J(\vb x^i) & \bm 0] \mqty[\Delta \vb x \\ \Delta \bm \lambda] = \mqty[-P_{\vb x^i}[\grad f(\vb x^i)] \\ \bm 0],
\end{equation*}
where $\bm \lambda^i$ solves the linear system:
\begin{equation*}
\vb J(\vb x^i) \vb J^\T(\vb x^i) \bm \lambda^i = -\vb J(\vb x^i) \grad f(\vb x^i).
\end{equation*}
As Absil et al. note \cite{absil2009}, this is the same system of equations typically solved in equality-constrained SQP routines with the exception that $\bm \lambda^i$ is set to be the coefficients of the projection of $\grad f(\vb x^i)$ onto the normal space and not maintained/updated independently during the course of the algorithm. $\Delta \bm \lambda$ in this context, then, is rather useless outside of the saddle point system solve. It is also worth noting that, at a solution, $\vb W(\vb x^i, \bm \lambda^i)\Delta \vb x + \vb J^\T(\vb x^i)\Delta \bm \lambda = P_{\vb x^i}[\vb W(\vb x^i, \bm \lambda^i)]\Delta \vb x$, which can be identified with the action of the intrinsic Riemannian Hessian on the tangent space vector $\Delta \vb x$.

Given the SVD decomposition of $\vb J^\T(\vb x^i)$, $\bm \lambda^i$ can readily be calculated as
\begin{equation}
\bm \lambda^i = -\vb V_{:,1:r} \bm \Sigma^{-1}_{1:r,1:r} \vb U_{:,1:r}^\T \grad f(\vb x^i),
\label{eq:lambda_equality}
\end{equation}
where $r$, again, is the numerical rank of $\vb J(\vb x^i)$ and the ``slice'' notation for $\vb V$, $\bm \Sigma$, and $\vb U$ represents the truncated pseudoinverse employing only the first $r$ singular value-vector triples. Likewise, since $\vb U_{:,1:r}$ spans the same space as $\vb J^\T(\vb x^i)$, the saddle point system can be reformulated using the more numerically stable matrix $\vb U_{:,1:r}$ as:
\begin{equation}
\mqty[\vb W(\vb x^i, \bm \lambda^i) & \vb U_{:,1:r} \\ \vb U_{:,1:r}^\T & \bm 0] \mqty[\Delta \vb x \\ \Delta \bm \lambda] = \mqty[-P_{\vb x^i}[\grad f(\vb x^i)] \\ \bm 0].
\label{eq:equality_saddle}
\end{equation}
Here, abusing notation slightly, $\Delta \bm \lambda$ is used again in the solution even though it is paired with a different basis for the tangent space (i.e., the columns $\vb U_{:,1:r}$) because it is not used in the context of LFPSQP.

\begin{algorithm}
\caption{Generation of the inexact Newton search direction}
\label{alg:newton_step}
\begin{algorithmic}[1]
\Procedure{GenerateInexactNewtonDirection}{$\vb x^i$, $f$, $\vb J$, $\delta$}

\State Generate SVD decomposition of $\vb J^\T(\vb x^i)$
\State Solve equation \ref{eq:equality_saddle} to an absolute residual tolerance of $\delta$

\State \textbf{return} $\Delta \vb x$

\EndProcedure
\end{algorithmic}
\end{algorithm}

Algorithm \ref{alg:newton_step} is the skeleton of the subroutine to generate a search direction that takes advantage of second-order information. Inexact, or truncated, Newton routines have proven to be very useful in the context of optimization \cite{byrd2008, dembo1982, nash2000} as well as Riemannian optimization \cite{absil2007, aihara2017, ma2021, zhang2010, zhao2018a} as they avoid the need to construct Hessians explicitly and avoid a large of amount of computational effort solving linear systems completely at each step of the outer optimization loop. In this work, we employ the projected conjugate gradient method \cite{gould2001, nocedal2006} and mixed-mode automatic differentiation to solve equation \ref{eq:equality_saddle} iteratively. Without a good preconditioner known \textit{a priori} for the general problem, we use an adapted version of the non-preconditioned Algorithm 6.2 from ref. \cite{gould2001}, as shown in Algorithm \ref{alg:projcg}.

\begin{algorithm}
\caption{Projected conjugate gradient with orthonormal constraints}
\label{alg:projcg}
\begin{algorithmic}[1]
\Procedure{ProjCG}{$\vb A$, $\vb U$, $\vb b$, $\delta$}
\Comment Solves $\vb A \Delta \vb x + \vb U \Delta \bm \lambda = \vb b$, $\vb U^\T \Delta \vb x = \bm 0$.

\State $\Delta \vb x \gets \bm 0$
\State $\vb r, \vb g \gets -(\vb I - \vb U \vb U^\T)\vb b$
\State $\vb p \gets -\vb g$

\While{$\norm{\vb r}_2 > \delta$}

	\State $\vb q \gets \vb A \vb p$
	\\
	\If{$\vb p \vdot \vb q \leq 0$}
		\Comment Nonpositive curvature direction
		\State $\Delta \vb x \gets \vb p / \norm{\vb p}_2$
		\State \textbf{break}
	\EndIf
	\\
	\If{$\vb r \vdot \vb g \leq 0$}
		\State \textbf{return error}
	\EndIf
	\\
	\State $\alpha \gets \vb r \vdot \vb g / (\vb p \vdot \vb q)$
	\State $\Delta \vb x \gets \Delta \vb x + \alpha \vb p$
	\State $\vb r^+ = \vb r + \alpha \vb q$
	\State $\vb g^+ = (\vb I - \vb U \vb U^\T)\vb r^+$
	\State $\beta \gets \vb r^+ \vdot \vb g^+ / (\vb r \vdot \vb g)$
	\State $\vb p \gets -\vb g^+ + \beta \vb p$
	\State $\vb g, \vb r \gets \vb g^+$
	
\EndWhile

\State \textbf{return} $\Delta \vb x$
\EndProcedure
\end{algorithmic}
\end{algorithm}

Importantly, updating the residual, $\vb r$ with $\vb g^+$ in line 22 of Algorithm \ref{alg:projcg} prevents numerical roundoff errors from accruing. Unlike Algorithm 6.2 from ref. \cite{gould2001}, Algorithm \ref{alg:projcg} does not apply so-called iterative refinement during projection since the use of the orthonormal matrix $\vb U$ in the projection is optimally conditioned. Furthermore, if a negative curvature direction is detected (and $\vb A$ is determined to be indefinite or negative definite), that direction is returned as a search direction. This is a significant advantage of using the projected conjugate gradient algorithm in the context of optimization compared to other iterative algorithms (e.g., GMRES) or even full linear solves since the Hessian of the Lagrangian may not be positive semidefinite at many of the iterates that are far away from a local optimum. Some nonlinear solvers, such as IPOPT \cite{wachter2006}, heuristically add a small positive multiple of the identity matrix and try to restore feasibility in order to ensure positive definiteness of the Hessian of the Lagrangian, but such schemes are not necessary in LFPSQP. Overall, the iterative projected conjugate gradient algorithm should be robust for a broad variety of nonlinear problems.

As suggested by Dembo et al. \cite{dembo1982}, the absolute residual tolerance, $\delta$, used for inexact Newton solves at the $i$-th iterate is set as:
\begin{equation}
\delta = \kappa \min\left(1, \frac{\norm{\grad f(\vb x^i)}_2}{\norm{\grad f(\vb x^{i-1})}_2}\right) \norm{\grad f(\vb x^i)}_2
\end{equation}
and prevents ``oversolving'' far away from critical points. The parameter $\kappa \in (0,1)$ is user-specified and set to $1/2$ by default. If the Hessian of the Lagrangian is positive definite in the tangent space at a local optimum to which iterates converge, then with this choice of tolerance, it can be shown that iterates converge superlinearly (at least quadratically) to the local optimum \cite[thm. 8.2.1]{absil2009a}.

With methods for step generation and line searches specified, the following sections detail the construction of retractions that are called by the line search routines.

\subsection{Retractions}

Two computationally efficient retractions are constructed for arbitrary implicitly defined submanifolds. The first is based on quasi-Newton iteration, and the second is a numerical ``inner'' optimization routine for projection. Both rely on a user-specified parameter, $\epsilon_c$, that determines the tolerance to which the constraints are maintained (i.e., $\norm{\vb c(\vb x)}_\infty < \epsilon_c$). The quasi-Newton retraction requires the Jacobian of $\vb c$ to be full-rank, whereas the projection retraction does not and is globally convergent. Practically, the projection retraction is the default retraction chosen by the LFPSQP code. If one chooses to use the quasi-Newton retraction and the algorithm detects that the Jacobian is not numerically full-rank via the SVD, then the projection retraction is selected automatically regardless.

\subsubsection{Quasi-Newton Retraction}

Given the SVD decomposition of $\vb J^\T(\vb x^i)$ at the $i$-th step of the outer optimization loop, assume $\vb J^\T$ is full-rank. Then, one can consider the retraction (identifying $\Delta \vb x \in T_{\vb x^i} \mathcal M$ with a vector in $\R^n$):
\begin{equation}
R_{\vb x^i}: \Delta \vb x \mapsto \vb x^i + \Delta \vb x + \vb U \vb w, \quad \vb w \text{ s.t. } \vb c(\vb x^i + \Delta \vb x + \vb U \vb w) = \bm 0.
\label{eq:equality_newton_retraction}
\end{equation}
Such a retraction was considered by Gabay \cite{gabay1982} and is called the \textit{orthographic retraction} by Absil et al. \cite{absil2012} since it projects a point in the tangent plane orthographically (i.e., normal to the tangent space at $\vb x^i$) back onto the constraint manifold. Such a retraction is well defined for tangent vectors in an open neighborhood of $\bm 0_{\vb x^i} \in T_{\vb x^i} \mathcal M$ \cite[sec. 4.1.3]{absil2009a}). Additionally, this retraction is second-order \cite{absil2012}.

In order to computationally realize the retraction in equation \ref{eq:equality_newton_retraction} and calculate $\vb w$, we turn to Broyden's ``good'' quasi-Newton method \cite{broyden1973}. When $\Delta \vb x = \bm 0$, $\vb w = \bm 0$ yields a solution, and the Jacobian of $\vb c$ with respect to $\vb w$ is given by
\begin{equation}
\left. \frac{\partial \vb c(\vb x^i + \vb U \vb w)}{\partial \vb w} \right|_{\vb w = \bm 0} = \vb V \bm \Sigma.
\end{equation}
For small values of $\norm{\Delta \vb x}_2$, one should expect that the matrix $\vb B = \bm \Sigma^{-1} \vb V^\T$, which is readily computed, should be a good estimate for the true Jacobian inverse at the solution if $\vb J$ is continuous. Thus, Broyden's ``good'' quasi-Newton method is conducted with an initial inverse Jacobian guess of $\vb B = \bm \Sigma^{-1} \vb V^\T$ and updated by a rank-1 matrix at each iteration until convergence (i.e., $\norm{\vb c(\vb x^i + \Delta \vb x + \vb U \vb w)}_\infty < \epsilon_c$) is achieved. Each Broyden rank-1 update of the approximate inverse Jacobian, $\vb B$, requires $\order{m^2}$ flops, and multiplication of $\vb U$ by $\vb w$ at each iteration requires $\order{nm}$ flops. Assuming a small number of quasi-Newton iterations are required relative to $m$, the overall running time of the retraction is $\order{nm}$. Another advantage of using such a quasi-Newton method is the avoidance of having to recalculate the $m \times n$ Jacobian, $\vb J$, at every intermediate step, which may be a costly calculation compared to the basic matrix multiplications and rank-1 updates exhibited by Broyden's method. The full scheme is shown in Algorithm \ref{alg:equality_newton_projection}.

\begin{algorithm}
\caption{Quasi-Newton orthographic retraction}
\label{alg:equality_newton_projection}
\begin{algorithmic}[1]
\Procedure{QNRetract}{$\vb x^i$, $\Delta \vb x$, $\vb c$, $\vb U$, $\bm \Sigma$, $\vb V$, $\epsilon_c$, $k_\mathrm{max}$}

\State $\vb B \gets \bm \Sigma^{-1} \vb V$
\State $\vt x \gets \vb x^i + \Delta \vb x$
\State $\vb x^{i+1} \gets \vt x$

\State $k \gets 0$
\State $\vb c^k \gets \vb c(\vb x^{i+1})$

\While{$\norm{\vb c^k}_\infty > \epsilon_c$ and $k < k_\mathrm{max}$}
	\State $\Delta \vb w \gets - \vb B \vb c(\vb x^{i+1})$
	\State $\vb x^{i+1} \gets \vb x^{i+1} + \vb U \Delta \vb w$
	\State $\vb c^{k+1} \gets \vb c(\vb x^{i+1})$
	\State $\Delta \vb c \gets \vb c^{k+1} - \vb c^k$
	\State $\vb u \gets \Delta \vb w - \vb B \Delta \vb c$
	\State $\vb v \gets \vb B^\T \Delta \vb w$
	\State $\vb B \gets \vb B + \frac{1}{\vb v \vdot \Delta \vb c} \vb u \vb v^\T$
	\State $k \gets k + 1$
\EndWhile
\\
\If{$k = k_\mathrm{max}$}
	\Comment Convergence not established by $k_\mathrm{max}$ iterations
	\State \textbf{return error}
\EndIf
\\
\State \textbf{return} $\vb x^{i+1}$

\EndProcedure
\end{algorithmic}
\end{algorithm}

While Broyden's method is well known to be \textit{locally} superlinearly convergent under certain assumptions \cite{broyden1973}, it is not guaranteed that the above described procedure will converge for certain proposed steps $\Delta \vb x$. This is especially true if $\norm{\Delta \vb x}_2$ is large relative to a characteristic inverse curvature of $\vb c$, in which case there may not exist \textit{any} solutions to $\vb c(\vb x^i + \Delta \vb x + \vb U \vb w) = \bm 0$. Thus, while the quasi-Newton retraction may be useful for many problems, it may not be practical for all problems, in which case one should turn to the more robust projection retraction below.

\subsubsection{Projection Retraction}

The projection retraction is defined as
\begin{equation}
R_{\vb x^i} : \Delta \vb x \mapsto \hat{\vb x}
\end{equation}
where, letting $\vt x = \vb x^i + \Delta \vb x$ (again identifying $\Delta \vb x \in T_{\vb x^i} \mathcal M$ with a vector in $\R^n$),
\begin{equation}
\begin{array}{rl}
\hat{\vb x} \in \displaystyle \argmin_{\vb x \in \R^n} & \frac{1}{2}\norm{\vb x - \vt x}_2^2 \\
\mathrm{s.t.} & \vb c(\vb x) = \bm 0. \\
\end{array}
\end{equation}
Assuming $\vb c(\vb x^i) = \bm 0$, such a projection retraction is well defined. Furthermore, if $\mathcal M = \{\vb x \in \R^n \mid \vb c(\vb x) = \bm 0 \}$ is indeed a smooth submanifold of Euclidean space, then the retraction is second-order \cite{absil2012}. In order to solve the above optimization problem, we use the classical unconstrained quadratic penalty method \cite{bertsekas2016, mccormick1983, gould1989} involving a sequence of minimization problems indexed by a penalty parameter, $\mu$:
\begin{equation}
\hat{\vb x}_\mu \in \argmin_{\vb x \in \R^n} \frac{\mu}{2}\norm{\vb x - \vt x}_2^2 + \frac{1}{2}\norm{\vb c(\vb x)}_2^2 = \argmin_{\vb x \in \R^n} \frac{1}{2}\norm{\vb x - \vt x}_2^2 + \frac{1}{2\mu}\norm{\vb c(\vb x)}_2^2.
\label{eq:penalty_projection_opt}
\end{equation}
It is known that for a sequence $(\mu^k)_{k \in \mathbb N}$ such that $\mu^k \to 0$, if $\hat{\vb x}_\mu$ and $\hat{\vb x}$ are unique solutions to their respective problems, then $\hat{\vb x}_\mu \to \hat{\vb x}$. Unlike a more sophisticated technique, such as an augmented Lagrangian method, that could exhibit better theoretical convergence properties \cite{bertsekas1976}, the quadratic penalty method does not require the Jacobian of $\vb c$ to be full-rank at an optimum to exhibit convergence to $\hat{\vb x}$. We thus choose to use the quadratic penalty method in part for this robustness against degeneracy of the constraints \cite{dowling2015, anitescu2000}. Furthermore, although it is often cited that ill-conditioning prevents the quadratic penalty from being useful without modifications, proposed steps $\vt x$ often remain close to the feasible set, and we find that such ill-conditioning in practice does not induce numerical issues. If numerical issues do happen to arise and the retraction algorithm does not converge, then a line search routine in LFPSQP will shrink the proposed step, bringing $\vt x$ even closer to $\vb x^i$ and consequently closer to the feasible set (again assuming that $\vb x^i \in \mathcal M$).

Let $\hat{\vb x}^{k}$ represent the current iterate in the inner optimization routine for the projection retraction. Consider the placement of the penalty term in equation \ref{eq:penalty_projection_opt} on the first squared distance term, and let
\begin{equation}
\phi_{\mu}(\vb x) = \frac{\mu}{2}\norm{\vb x - \vt x}_2^2 + \frac{1}{2}\norm{\vb c(\vb x)}_2^2.
\end{equation}
Instead of solving problem \ref{eq:penalty_projection_opt} exactly, a single Gauss-Newton step in the direction $\vb p \in \R^n$ satisfying
\begin{equation}
\left( \vb J^\T(\hat{\vb x}^{k}) \vb J(\hat{\vb x}^{k}) + \mu^k \vb I \right) \vb p = -\left( \vb J^\T(\hat{\vb x}^{k}) \vb c(\hat{\vb x}^{k}) + \mu^k(\hat{\vb x}^{k} - \vt x) \right)
\label{eq:penalty_lm}
\end{equation}
and of a length satisfying a sufficient decrease in the unconstrained objective function of problem \ref{eq:penalty_projection_opt} via an Armijo line search on $\phi_{\mu^k}$ is taken to update $\hat{\vb x}^{k}$ (though we find often find it to be unnecessary in practice). Note the similarity of the update in equation \ref{eq:penalty_lm} compared to the classical Levenberg-Marquardt update for solving nonlinear equations, which is known to converge quadratically to a solution under certain assumptions on $\vb c$ \cite{yamashita2001}. As such, we update $\mu$ analogously by setting it to the current norm of the deviation from the feasible set as $\mu^{k+1} = \norm{\vb c(\hat{\vb x}^{k+1})}_2$, which appears to work well in practice. Furthermore, instead of directly inverting the approximate positive-definite Hessian in equation \ref{eq:penalty_lm}, we use an iterative conjugate gradient routine to solve for $\vb p$ to a small residual tolerance of $\epsilon_c$. Again, because the original proposal $\vt x$ is close to the feasible set, only several iterations of conjugate gradient are necessary despite the ill-conditioning of the approximate Hessian (i.e., $\kappa(\vb J^\T \vb J + \mu^k \vb I) = (\sigma_J^2 + \mu^k)/\mu^k$, where $\sigma_J$ is the maximum singular value of $\vb J$). Since each multiplication with $\vb J$ involves $\order{nm}$ flops, if a small number of conjugate gradient and quadratic penalty update steps are taken relative to $m$, then the projection retraction overall requires $\order{nm}$ flops. Algorithm \ref{alg:equality_projection_retraction} shows the skeleton of the projection retraction routine employed in LFPSQP.

\begin{algorithm}
\caption{Projection retraction}
\label{alg:equality_projection_retraction}
\begin{algorithmic}[1]
\Procedure{ProjectRetract}{$\vb x^i$, $\Delta \vb x$, $\vb c$, $\vb J$, $\epsilon_c$, $k_\mathrm{max}$, $\mu^0$}

\State $\vt x \gets \vb x^i + \Delta \vb x$
\State $k \gets 0$
\State $\hat{\vb x}^{k} \gets \vt x$

\State $\vb c^k \gets \vb c(\hat{\vb x}^k)$

\While{$\norm{\vb c^k}_\infty > \epsilon_c$ and $k < k_\mathrm{max}$}
	\State Calculate $\vb J(\hat{\vb x}^k)$
	\State Solve equation \ref{eq:penalty_lm} for $\vb p$ to an absolute residual tolerance of $\epsilon_c$
	
	\State $\hat{\vb x}^{k+1} \gets \textsc{LineSearch}(\phi_{\mu^k}, \alpha \mapsto \hat{\vb x}^{k} + \alpha\vb p)$
	
	\State $k \gets k + 1$
	\State $\vb c^k \gets \vb c(\hat{\vb x}^k)$
	\State $\mu^k \gets \norm{\vb c^k}_2$
\EndWhile
\\
\If{$k = k_\mathrm{max}$}
	\Comment Convergence not established by $k_\mathrm{max}$ iterations
	\State \textbf{return error}
\EndIf
\\

\State $\vb x^{i+1} \gets \hat{\vb x}^k$
\State \textbf{return} $\vb x^{i+1}$

\EndProcedure
\end{algorithmic}
\end{algorithm}

Interestingly, we have found the generation of an analytically invertible preconditioner that approximates $\vb J$ in equation \ref{eq:penalty_lm} with $\vb U$ from the SVD decomposition of $\vb J(\vb x^i)$ does not appear to be useful, even for small step sizes $\norm{\Delta \vb x}_2$. The construction of a simple, general preconditioner for the inner conjugate gradient routine of this projection retraction remains an open problem and would be interesting to consider in future work.

\section{Inequality Constraints}
We now turn our attention to the full, mixed equality and inequality constrained optimization problem in equation \ref{eq:mixed_opt}. Similar to the approach taken by Gabay \cite{gabay1982} and several others, the (smooth) inequality constraints and bound constraints can be transformed to (smooth) equality constraints yielding an equivalent problem in $2n'$ variables with $n' + m'$ equality constraints (where $n' = n + p$ and $m' = m + p$) of the form:
\begin{equation}
\begin{array}{rl}
\displaystyle \min_{\vb x, \vb y \in \R^{n+p}} & f'(\vb x) \\
\mathrm{s.t.} & \vb c'(\vb x) = \bm 0 \\
& \vb h(\vb x, \vb y) = \bm 0.
\end{array}
\label{eq:transformed_mixed_opt}
\end{equation}
Letting $\vb x_{1:n}$ represent a ``slice'' of the first $n$ variables of $\vb x$, the functions in problem \ref{eq:transformed_mixed_opt} are defined as
\begin{align*}
f' &: \vb x \mapsto f(\vb x_{1:n}) \\
\vb c' &: \vb x \mapsto \mqty[\vb c(\vb x_{1:n}) \\ \vb d(\vb x_{1:n}) - \vb x_{n+1:n+p}].
\end{align*}
The variables $\vb x_{n+1:n+p}$ can be considered slack variables. Additionally, bounds on $\vb x$ are concatenated with those that were placed on $\vb d$ prior to problem transformation as $\vb l = (\vb x^l, \vb d^l)$ and $\vb u = (\vb x^u, \vb d^u)$. It is assumed that all entries of $\vb u$ are greater than or equal to those of $\vb l$; otherwise, the problem can immediately be deemed infeasible. The function $\vb h: \R^{n+p} \times \R^{n+p} \to \R^{n+p}$ is responsible for maintaining the box constraints $\vb l \leq \vb x \leq \vb u$ and is defined as
\begin{equation}
h_k : (\vb x, \vb y) \mapsto q_k(x_k - r_k)^2 + (1 - q_k^2)x_k + s_k(y_k - r_k)^2 - (1-s_k^2)y_k - t_k
\end{equation}
for $k \in \{1, \ldots, n+p\}$ and for constant vectors $\vb q$, $\vb r$, $\vb s$, $\vb t \in \R^{n+p}$ defined as
\begin{equation}
\begin{array}{lllllll}
q_k = 0, & r_k = 0, & s_k =\phantom{-} 0, & t_k = 0 & \text{if} & l_k = -\infty, & u_k = \infty \\
q_k=0, & r_k = l_k, & s_k = -1, & t_k = l_k & \text{if} & l_k \in \R, & u_k = \infty \\
q_k=0, & r_k = u_k, & s_k = \phantom{-}1, & t_k = u_k & \text{if} & l_k = -\infty, & u_k \in \R \\
q_k=1, & r_k = \frac{u_k + l_k}{2}, & s_k = \phantom{-}1, & t_k = \frac{(u_k - l_k)^2}{4} & \text{if} & l_k \in \R, & u_k \in \R. \\
\end{array}
\end{equation}
These different conditions on the coefficients in each function $h_k$ represent the different box constraints imposed on $\vb x$. If $x_k$ is unconstrained, then $x_k = y_k$. If $x_k$ exhibits only one finite bound, then $x_k$ is a parabola with respect to $y_k$ with a lower (upper) bound of $l_k$ ($u_k$). Finally, if both lower and upper bounds on $x_k$ are finite, then $h_k$ defines a circle in $(x_k, y_k)$ space between the respective lower and upper bounds. In this way, $\vb h$ avoids the need for two multipliers for every variable's box constraint.

The sparsity of the constraints $\vb h$ can be exploited to yield an algorithm with similar asymptotic complexity compared to the underlying routines of the strictly equality constrained case detailed in section 3 as long as $p = \order{n}$. Namely, with $m' = m + p$ nonlinear constraints $\vb c'$, the outer optimization loop still involves a factorization requiring $\order{n m'^2}$ flops and a retraction subroutine with inner steps requiring $\order{nm'}$ flops.

\subsection{Step Generation and Linesearch}

The Jacobian of $\vb h$ with respect to $(\vb x, \vb y)$, denoted $\vb K$, can be calculated analytically as
\begin{equation}
\vb K^\T(\vb x, \vb y) = \mqty[ \diag(2\vb q \odot(\vb x - \vb r) + (\vb 1 - \vb q \odot \vb q)) \\ \diag(2\vb s \odot(\vb y - \vb r) - (\vb 1 - \vb s \odot \vb s)) ],
\end{equation}
where ``$\odot$'' represents element-wise multiplication and ``$\diag(\vb v)$'' denotes a diagonal matrix with diagonal given by $\vb v$. An orthonormal basis for $\vb K^\T$ can also be readily computed by simply normalizing each of the columns, resulting in the decomposition: 
\begin{equation}
\vb K^\T = \mqty[\vb D_x \\ \vb D_y] \vb S,
\end{equation}
where $\vb S$ is a diagonal matrix containing the normalization constants for each column and the blocks corresponding to the $\vb x$ and $\vb y$ variables are labeled as such. Notably, because $\vb D_x$, $\vb D_y$, and $\vb S$ are all $n' \times n'$ diagonal matrices, they only require $\order{n}$ storage and $\order{n}$ flops for multiplication.

Let $\vb J$ continue to represent the Jacobian of $\vb c$ with respect to the $\vb x$ variables. Instead of computing an SVD of the entire $(n'+m') \times 2n'$ Jacobian of all the constraints, it is useful to construct the following decomposition:
\begin{equation}
\bm{\mathcal J}^\T = \mqty[\grad_{\vb x} \vb h & \vb J^\T \\ \grad_{\vb y} \vb h & \bm 0] = \mqty[\vb D_x & \vb U_x \\ \vb D_y & \vb U_y] \mqty[ \vb S & \vb R \\ \bm 0 & \bm \Sigma \vb V^\T],
\label{eq:inequality_decomp}
\end{equation}
where
\begin{equation}
\mqty[\vb U_x \\ \vb U_y] \bm \Sigma \vb V^\T = \left( \vb I - \mqty[\vb D_x \\ \vb D_y] \mqty[\vb D_x \\ \vb D_y]^\T \right) \mqty[\vb J^\T \\ \bm 0]
\end{equation}
is an SVD decomposition of the \textit{projected} Jacobian requiring $\order{n'm'^2}$ flops and
\begin{equation}
\vb R = \vb D_x \vb J^\T.
\end{equation}
``$\grad_{\vb x}$'' denotes the gradient with respect to the $\vb x$ variables. One can view equation \ref{eq:inequality_decomp} as a sort of ``block QR'' decomposition. Importantly, the matrix containing the $\vb D$ and $\vb U$ blocks serves as an orthonormal basis for the full problem Jacobian. Assuming that $\min_k (u_k - l_k) > \epsilon_\mathrm{rank}$ such that the entries on the diagonal of $\vb S$ are all greater than $\epsilon_\mathrm{rank}$, the numerical rank can still be computed via the singular values of $\bm \Sigma$ as
\begin{equation}
\rank_{\epsilon_\mathrm{rank}}(\bm{\mathcal J}) = n' + \rank_{\epsilon_\mathrm{rank}}(\bm \Sigma) \equiv n' + r.
\end{equation}
Thus, although more storage for the SVD and $\vb R$ is required compared to the factorization featured in section 3, the same asymptotic complexity with respect to $n$ and $m'$ is maintained assuming that $p = \order{n}$.

Let $\vb z^i = (\vb x^i, \vb y^i)$ be the concatenated iterate of $\vb x$ and $\vb y$ at the $i$-th step of the outer optimization loop, and let $\vb U$ now denote the $2n' \times m'$ matrix $\mqty[\vb U_x^\T & \vb U_y^\T]^\T$. Furthermore, let $\mathcal M$, now, denote the feasible set $\mathcal M = \{(\vb x, \vb y) \mid \vb c'(\vb x) = \bm 0, \vb h(\vb x, \vb y)  = \bm 0 \}$. As before, it is assumed that $\mathcal M$ is a smooth submanifold of Euclidean space, with $\bm{\mathcal J}$ full-rank at all points in $\mathcal M$ being sufficient for such a claim. With the above decomposition of $\bm{\mathcal J}$, the projected gradient of the objective function is now given by
\begin{equation}
\mathrm{grad}\, f'(\vb z^i) = P_{\vb z^i}[\grad_{\vb z} f'(\vb x^i)] = \left( \vb I - \mqty[\vb D_x \\ \vb D_y] \mqty[\vb D_x \\ \vb D_y]^\T - \vb U_{:,1:r} \vb U_{:,1:r}^\T \right)\mqty[\grad_{\vb x} f'(\vb x^i) \\ \bm 0].
\end{equation}
The generation of the gradient search direction in Algorithm \ref{alg:gradient} can be easily modified to handle updating $\vb z^i$ instead of $\vb x^i$ alone as shown in Algorithm \ref{alg:inequality_gradient}.

\begin{algorithm}
\caption{Generation of the gradient search direction}
\label{alg:inequality_gradient}
\begin{algorithmic}[1]
\Procedure{GenerateGradientDirection}{$\vb z^i$, $f'$, $\vb J$}

\State Generate decomposition \ref{eq:inequality_decomp} at $\vb z^i$
\State $\Delta \vb z \gets P_{\vb z^i}[-\grad_{\vb z} f'(\vb x^i)]$

\State \textbf{return} $\Delta \vb z$

\EndProcedure
\end{algorithmic}
\end{algorithm}

For convenience, let $\bm \lambda^i = (\bm \lambda^{h,i}, \bm \lambda^{c',i})$ represent the concatenation of multipliers associated with $\vb h$ and $\vb c'$, respectively. Analagous to equation \ref{eq:lambda_equality} and using the decomposition in equation \ref{eq:inequality_decomp},
\begin{equation}
\mqty[\bm \lambda^{h,i} \\ \bm \lambda^{c',i}] = -(\bm{\mathcal J}^\T)^\dagger \grad_{\vb z} f'(\vb x^i) = -\mqty[\vb S^{-1} & -\vb S^{-1} \vb R \vb V_{:,1:r} \bm \Sigma_{1:r,1:r}^{-1} \\ \bm 0 & \vb V_{:,1:r}\bm \Sigma_{1:r,1:r}^{-1} ] \mqty[\vb D_x \grad_{\vb x} f'(\vb x^i) \\ \vb U_{x,:,1:r}^\T \grad_{\vb x} f'(\vb x^i)],
\end{equation}
where $\vb A^\dagger$ represents the pseudo-inverse of $\vb A$. Given $\grad_{\vb x} f'$, $\bm \lambda^i$ is computable in $\order{n'm'}$ flops with the dominant computations being matrix multiplications with the dense matrices $\vb R$ and $\vb V$.

Analogous to equation \ref{eq:equality_saddle}, the Newton search direction saddle point problem becomes:
\begin{equation}
\mqty[\vb W'(\vb x^i, \bm \lambda^{c',i}) + \vb H_x(\bm \lambda^{h,i}) & \bm 0 & \vb D_x & \vb U_{x,:,1:r} \\ 
\bm 0 & \vb H_y(\bm \lambda^{h,i}) & \vb D_y & \vb U_{y,:,1:r} \\
\vb D_x & \vb D_y & \bm 0 & \bm 0 \\
\vb U_{x,:,1:r}^\T & \vb U_{y,:,1:r}^\T & \bm 0 & \bm 0]
\mqty[\Delta \vb x \\ \Delta \vb y \\ \Delta \bm \lambda^{h} \\ \Delta \bm \lambda^{c'}] = \mqty[-P_{\vb z^i}[\grad_{\vb z} f'(\vb x^i)] \\ \vdots \\ \bm 0 \\ \bm 0].
\label{eq:inequality_saddle}
\end{equation}
Here, $\vb W'$ is defined analagously to $\vb W$ in section 3 except with $f'$ and $\vb c'$ and with second derivatives with respect to the $\vb x$ variables. Additionally, $\vb H_x = 2\diag(\bm \lambda^{h,i} \odot \vb q)$ and $\vb H_y = 2\diag(\bm \lambda^{h,i} \odot \vb s)$ are diagonal matrices representing second-order derivatives of $\vb h$. Notably, the only dense subblocks of the matrix on the left-hand side of equation \ref{eq:inequality_saddle} are those involving $\vb W'$ and the $\vb U$ matrices. Thus, matrix multiplication in each iteration of the projected conjugate gradient used to solve equation \ref{eq:inequality_saddle} only requires $\order{n'm'}$ flops. Algorithm \ref{alg:inequality_newton_step} is analogous to Algorithm \ref{alg:newton_step} and details the generation of an inexact Newton search direction based on equation \ref{eq:inequality_saddle}.

\begin{algorithm}
\caption{Generation of the inexact Newton search direction}
\label{alg:inequality_newton_step}
\begin{algorithmic}[1]
\Procedure{GenerateInexactNewtonDirection}{$\vb z^i$, $f'$, $\vb J$, $\delta$}

\State Generate decomposition \ref{eq:inequality_decomp} at $\vb z^i$
\State Solve equation \ref{eq:inequality_saddle} to an absolute residual tolerance of $\delta$

\State \textbf{return} $\Delta \vb z$

\EndProcedure
\end{algorithmic}
\end{algorithm}

\subsection{Retractions}
The retractions described in the strictly equality-constrained case can also be adapted to take advantage of the sparsity of the full Jacobian $\bm{\mathcal J}$ of the augmented problem with respect to $(\vb x, \vb y)$.

\subsubsection{Quasi-Newton Retraction}

Since the functional form of $\vb h$ is known, specific and computationally efficient retractions can be developed for $\mathcal M^h = \{ (\vb x, \vb y) \mid \vb h(\vb x, \vb y) = \bm 0 \}$, the feasible set of $\vb h$, to be used in conjunction with the quasi-Newton scheme discussed in section 3.3.1 for the strictly equality constrained case. Similarly, it will be assumed for this retraction that  $\bm{\mathcal J}$ is (numerically) full-rank (i.e., the bound constraints and the nonlinear constraints are together linearly independent).

With all elements of $\vb u$ strictly greater than $\vb l$, the feasible set associated with $\vb h$,  $\mathcal M^h = \{ (\vb x, \vb y) \mid \vb h(\vb x, \vb y) = \bm 0 \}$, is a submanifold of Euclidean space since the Jacobian is full-rank for all points in $\mathcal M^h$. A retraction
\begin{equation}
R^h_{\vb z^i} : T_{\vb z^i} \mathcal M^h \to \mathcal M^h, (\Delta \vb x, \Delta \vb y) \mapsto (x^{i+1}_1, \ldots, x^{i+1}_{n'}, y^{i+1}_1, \ldots, y^{i+1}_{n'})
\end{equation}
can be constructed in a coordinate-wise manner that takes advantage of the structure of $\vb h$, where each pair $(x_k^{i+1}, y_k^{i+1}$ of the retraction's output is given by
\begin{equation}
\begin{split}
(x_k^{i+1}, y_k^{i+1} ) &= \begin{cases}
(x_k^i + \Delta x_k,  y_k^i + \Delta y_k) & l_k = -\infty, u_k = -\infty \\
(x_k + \Delta x_k + \gamma_k \xi_{k,x}, y_k + \Delta y_k + \gamma_k \xi_{k,y}) & \text{one of } l_k, u_k \text{ is infinite} \\
(r_k, r_k) + \sqrt{t_k} \frac{(x_k^i + \Delta x_k - r_k,  y_k^i + \Delta y_k - r_k)}{\norm{(x_k^i + \Delta x_k - r_k,  y_k^i + \Delta y_k - r_k)}_2} & l_k \in \R, u_k \in \R \\
\end{cases}\\
(\xi_{k,x}, \xi_{k,y}) &\equiv \frac{(-s_k, -2(y_k^i - r_k)}{\norm{(-s_k, -2(y_k^i - r_k)}_2} - (\Delta x_k, \Delta y_k) \\
\gamma_k &\text{ satisfies } h_k(x_k^{i+1}, y_k^{i+1}) = 0
\end{split}
\label{eq:y_retractions}
\end{equation}
for $k \in \{1, \ldots, n' \}$. Again, $\Delta \vb z \in T_{\vb z^i} \mathcal M^h$ is identified with a vector in Euclidean space. The rules above are defined in a piecewise manner to account for different geometries (i.e., line, parabola, and circle) that the box constraints induce. The retraction for a line constraint (first case) is somewhat trivial since a step in the tangent space remains on the line. The retraction for the parabola (second case) involves ``projecting'' the step back onto the parabola along a line to a specially chosen point a unit length ``inward'' to the parabola from $(x_k^i, y_k^i)$ that forms a right angle with the tangent space at $(x_k^i, y_k^i)$. Finally, the retraction for the circle (third case) is a simple bona fide projection back onto the circle. A visualization of these retractions in equation \ref{eq:y_retractions} is shown in Figure \ref{fig:y_retractions}.

\begin{figure}[ht]
\centering
\includegraphics[width=\textwidth]{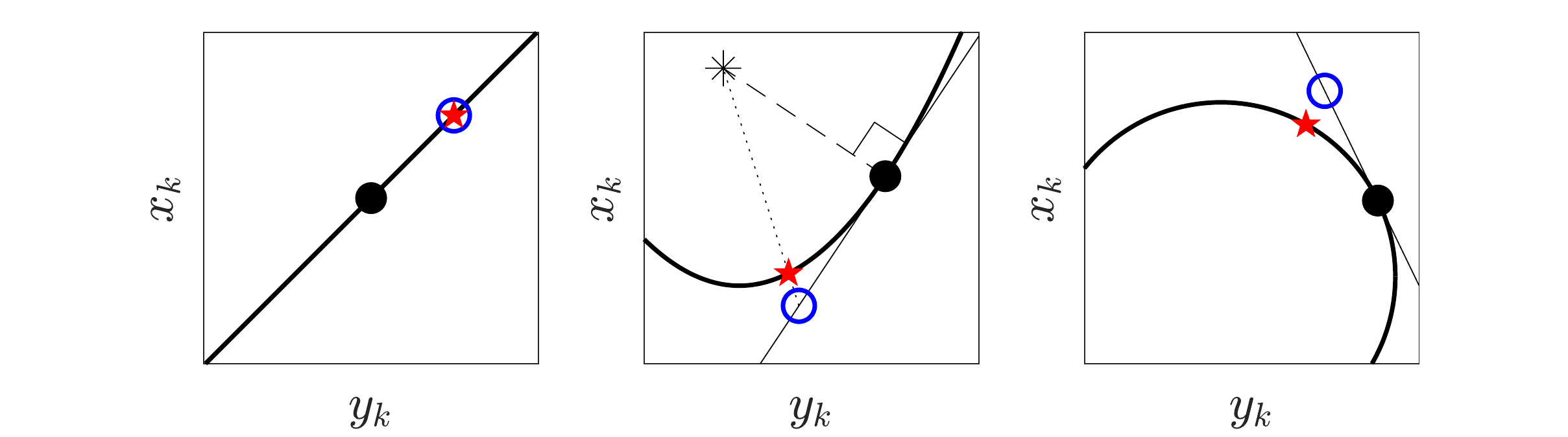}
\caption{Visualization of the coordinate-wise retractions defined in equation \ref{eq:y_retractions}. For each of the three cases, the black point represents $(x_k^i, y_k^i)$, the blue circle represents $(x_k^i + \Delta x_k, y_k^i + \Delta y_k)$, and the red star represents the retraction. The thin solid lines represent the tangent space at $(x_k^i, y_k^i)$. For the parabola (middle), the dotted line toward the asterisk represents the direction $(\xi_{k,x}, \xi_{k,y})$ in equation \ref{eq:y_retractions}.}
\label{fig:y_retractions}
\end{figure}

Given this retraction for $\mathcal M^h$ associated with $\vb h$, we can construct a composite retraction for iterates on $\mathcal M \subset \mathcal M^h$ as
\begin{equation}
\begin{split}
R_{\vb z^i} &: \Delta \vb z \mapsto R^h_{\vb z^i}(\Delta \vb x + \vb U_x \vb w, \Delta \vb y + \vb U_y \vb w), \\
&\qquad \qquad \vb w \text{ s.t. } \vb c'(R^h_{\vb z^i}(\Delta \vb x + \vb U_x \vb w, \Delta \vb y + \vb U_y \vb w)) = \bm 0.
\end{split}
\label{eq:inequality_newton_retraction}
\end{equation}
This retraction is quite similar to that of equation \ref{eq:equality_newton_retraction} except that satisfaction of the $\vb h$ constraints is ``automatically'' handled via composition with $R^h_{\vb z^i}$. Note that $(\Delta \vb x + \vb U_x \vb w, \Delta \vb y + \vb U_y \vb w)$ is indeed in $T_{\vb z^i} \mathcal M^h$ due to the orthnormality of $\grad_{\vb z} \vb h$ and $\vb U$ in the construction of the decomposition in equation \ref{eq:inequality_decomp}. In some sense, one can think of the above retraction as orthographic with respect to the $\vb c$ constraints and ``projective'' with respect to the $\vb h$ constraints. In fact, using the retractor formalism of Absil and Malick  \cite{absil2012}, the above retraction is second-order by the following proposition.

\begin{proposition}
The Newton retraction of equation \ref{eq:inequality_newton_retraction} is a second-order retraction.
\end{proposition}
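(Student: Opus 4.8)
The plan is to reduce the claim to two facts and then combine them in a single chain-rule computation. First, I would show that the coordinate-wise map $R^h_{\vb z^i}$ is itself a second-order retraction onto $\mathcal M^h$. Since $\vb h$ decouples across the pairs $(x_k,y_k)$, this can be verified one index $k$ at a time for each of the three geometries in \eqref{eq:y_retractions}. The line case is immediate, as a tangent step never leaves the line. The circle case is a genuine orthogonal projection onto $\{h_k=0\}$, which is second-order by the projection result of Absil and Malick \cite{absil2012}. The parabola case is the only delicate one: the key observation is that, evaluated at $\Delta \vb z = \bm 0$, the direction $(\xi_{k,x},\xi_{k,y})$ in \eqref{eq:y_retractions} collapses to the (normalized) vector $(-s_k,\,-2(y_k^i-r_k))$, which is exactly $\grad h_k$ at $(x_k^i,y_k^i)$ up to scaling. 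Hence along any curve $t\mapsto R^h_{\vb z^i}(t\vb v)$ the retraction reduces, to first order, to motion along the normal of $\mathcal M^h$, i.e. the Absil--Malick retractor at the base point is $N_{\vb z^i}\mathcal M^h = \spn(\grad_{\vb z}\vb h)$, which forces the acceleration into the normal space and yields the second-order property.

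Next I would record the orthogonality structure supplied by \eqref{eq:inequality_decomp}: the columns of $\vb U$ are orthonormal and orthogonal to those of $\grad_{\vb z}\vb h$, and together $\spn(\grad_{\vb z}\vb h)\oplus\spn(\vb U)=N_{\vb z^i}\mathcal M$, while $\spn(\grad_{\vb z}\vb h)=N_{\vb z^i}\mathcal M^h\subseteq N_{\vb z^i}\mathcal M$ because $\mathcal M\subseteq\mathcal M^h$. Writing $\vb U\vb w=(\vb U_x\vb w,\vb U_y\vb w)$, the correction $\vb U\vb w$ lies in $T_{\vb z^i}\mathcal M^h$ (as $\vb U\perp\grad_{\vb z}\vb h$), so the argument of $R^h_{\vb z^i}$ in \eqref{eq:inequality_newton_retraction} is always a legitimate tangent vector of $\mathcal M^h$. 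Fixing $\vb v\in T_{\vb z^i}\mathcal M$ and setting $F(t)=t\vb v+\vb U\vb w(t)$, the implicit function theorem gives a smooth $\vb w(t)$ with $\vb w(0)=\bm 0$ solving $\vb c'(\cdot)=\bm 0$ along $\gamma(t)=R^h_{\vb z^i}(F(t))$; the required nondegeneracy is precisely the full rank of $\bm{\mathcal J}$, which makes $\grad_{\vb z}\vb c'$ invertible on $\spn(\vb U)$. Differentiating the constraint once at $t=0$, using $\mathrm{D}R^h_{\vb z^i}(\bm 0)=\mathrm{Id}$ on $T_{\vb z^i}\mathcal M^h$ together with $\vb v\in\ker\bm{\mathcal J}$, forces $\vb w'(0)=\bm 0$; thus $F'(0)=\vb v$ and $R_{\vb z^i}$ is a first-order retraction.

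With $\vb w'(0)=\bm 0$ in hand, differentiating $\gamma$ twice yields $\gamma''(0)=\mathrm{D}^2 R^h_{\vb z^i}(\bm 0)[\vb v,\vb v]+\vb U\vb w''(0)$, since the linear term is $\mathrm{D}R^h_{\vb z^i}(\bm 0)[F''(0)]=\vb U\vb w''(0)$ and $F'(0)=\vb v$ kills all mixed contributions. The first term lies in $N_{\vb z^i}\mathcal M^h=\spn(\grad_{\vb z}\vb h)$ by the second-order property of $R^h_{\vb z^i}$, and the second lies in $\spn(\vb U)$; both are contained in $N_{\vb z^i}\mathcal M$, so $\gamma''(0)\in N_{\vb z^i}\mathcal M$ and $R_{\vb z^i}$ is second-order.

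The main obstacle I anticipate is the parabola case of the first step: unlike the circle, its retraction is not a bona fide orthogonal projection, so one must confirm that the $-(\Delta x_k,\Delta y_k)$ offset inside $(\xi_{k,x},\xi_{k,y})$ contributes only at higher order and therefore does not spoil the normal alignment of the retraction direction at $t=0$, which is what makes the tangential part of the coordinate-wise acceleration vanish. A secondary subtlety is ensuring the second-order term of the composite decouples as stated; this hinges on having established $\vb w'(0)=\bm 0$ \emph{before} the second differentiation, so that no surviving first-derivative-of-$\vb w$ cross terms perturb the tangential component.
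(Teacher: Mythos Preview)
Your proposal is correct, and the overall strategy is sound: the chain-rule decomposition $\gamma''(0)=D^2R^h_{\vb z^i}(\bm 0)[\vb v,\vb v]+\vb U\vb w''(0)$ (once $\vb w'(0)=\bm 0$ has been established from the implicit constraint) indeed lands both summands in $N_{\vb z^i}\mathcal M=\spn(\grad_{\vb z}\vb h)\oplus\spn(\vb U)$, and your treatment of the three coordinate geometries is accurate, including the parabola case, where $(-s_k,-2(y_k^i-r_k))$ is proportional to $\grad h_k$ because $s_k^2=1$.

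The paper, however, takes a different and shorter route. Rather than first proving $R^h_{\vb z^i}$ is second-order on $\mathcal M^h$ and then propagating this through a composite chain-rule argument, it works directly on $\mathcal M$ by writing down a single \emph{retractor} $D:T\mathcal M\to\mathrm{Gr}(n'-m')$ in the sense of Absil and Malick \cite{absil2012}: for each $(\vb z,\Delta\vb z)$ it takes the direct sum of $\spn(\vb U)$ with the one-dimensional ``correction directions'' $V_k$ determined by the coordinate-wise rules in \eqref{eq:y_retractions}. It then checks only two things---continuity of $D$ and $D(\vb z,\bm 0_{\vb z})=N_{\vb z}\mathcal M$ (the latter because $\sum_k V_k(\vb z,\bm 0_{\vb z})=\spn([\vb D_x;\vb D_y]^\T)$)---and invokes Theorem~22 of \cite{absil2012} to conclude. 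Your argument effectively reproves, by hand and for this particular composite structure, the content of that theorem; the paper outsources the differentiation entirely to the retractor machinery. What you gain is a self-contained proof that does not require the reader to consult \cite{absil2012}; what the paper gains is brevity and a cleaner conceptual picture in which the line, parabola, and circle cases are treated uniformly as instances of a retractor whose value at the zero section is the normal space.
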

\begin{proof}
Consider the retractor
\begin{equation}
D : T\mathcal M \to \mathrm{Gr}(n' - m'), (\vb z, \Delta \vb z) \mapsto \spn({\vb U}) + \sum_{k = 1}^{n'} V_k(\vb z, \Delta \vb z)
\end{equation}
where $T\mathcal M$ is the tangent bundle of $\mathcal M$ and $\mathrm{Gr}(n' - m')$ is the set of all $(n' - m')$-planes in $\R^{2n'}$ known as the Grassmann manifold. The use of ``$+$'' and the summation symbol represent direct sums of the relevant linear spaces, and $V_k(\vb z, \Delta \vb z) = \{ a (\ldots, 0, v_{k,x}, 0, \ldots, 0, v_{k,y}, 0, \ldots) \mid a \in \R  \}$, where
\begin{equation}
(v_{k,x}, v_{k,y}) = \begin{cases}
(D_{x,k}, D_{y,k}) & l_k = -\infty, u_k = -\infty \\
(\xi_{k,x}, \xi_{k,y}) & \text{one of } l_k, u_k \text{ is infinite} \\
(x_k^i + \Delta x_k - r_k,  y_k^i + \Delta y_k - r_k) & l_k \in \R, u_k \in \R \\
\end{cases}.
\end{equation}
It can be shown that $D$ is continuous in both $\vb z$ and $\Delta \vb z$. Furthermore,
\begin{equation*}
\sum_{k=1}^{n'} V_k(\vb z, \bm 0_{\vb z}) = \spn\left(\mqty[\vb D_x \\ \vb D_y] \right),
\end{equation*}
implying that $D(\vb z, \bm 0_{\vb z}) = N_{\vb z}\mathcal M$ for all $\vb z \in \mathcal M$. Thus, by theorem 22 of ref. \cite{absil2012}, equation \ref{eq:inequality_newton_retraction} is a second-order retraction induced by $D$.
\end{proof}

Similar to the quasi-Newton procedure detailed in section 3.3.1, the $m'$ coefficients, $\vb w$, in equation \ref{eq:inequality_newton_retraction} can also be found via a quasi-Newton procedure. At each ``inner'' quasi-Newton iteration, $R_{\vb z^i}^h$ can be computed in $\order{n'}$ flops, multiplication of $\vb w$ by $\vb U$ requires $\order{n' m'}$ flops, and Broyden updates require $\order{m'^2}$ flops, resulting overall in $\order{n'm'}$ flops per ``inner'' iteration.

\subsubsection{Projection Retraction}

The projection retraction in the context of problem \ref{eq:transformed_mixed_opt} is completely analogous to that of section 3.3.2 except that the projection objective function becomes
\begin{equation}
\phi_{\mu}(\vb z) = \frac{\mu}{2}\norm{\vb z - \vt z}_2^2 + \frac{1}{2}\norm{\vb c'(\vb x)}_2^2 + \frac{1}{2}\norm{\vb h(\vb x, \vb y)}_2^2,
\end{equation}
where $\vt z = \vb z + \Delta \vb z$ (identifying $\Delta \vb z$ as a vector in Euclidean space), and the Gauss-Newton step becomes
\begin{equation}
\left( \bm{\mathcal J}^\T(\hat{\vb z}^{k}) \bm{\mathcal J}(\hat{\vb z}^{k}) + \mu^k \vb I \right) \vb p = -\left( \bm{\mathcal J}^\T(\hat{\vb z}^{k}) \mqty[\vb c'(\hat{\vb x}^{k}) \\ \vb h(\hat{\vb x}^k, \hat{\vb y}^k)] + \mu^k(\hat{\vb z}^{k} - \vt z) \right)
\end{equation}
at each ``inner'' iteration $k$. Due to the structure of $\bm{\mathcal J}$, multiplication by $\bm{\mathcal J}$ requires $\order{n'm'}$ flops, resulting in a similar asymptotic scaling for each ``inner'' conjugate gradient step as the projection retraction in section 3.3.2.

\section{Numerical Examples}

The following are a few selected examples that showcase the performance of LFPSQP.

\subsection{Rayleigh quotient}

The minimization (maximization) of the Rayleigh quotient on a sphere is a classic Riemannian optimization problem \cite{edelman1998, absil2007, smith1994}. Given a symmetric matrix, $\vb A$, the problem
\begin{equation}
\begin{array}{rl}
\displaystyle \min_{\vb x \in \R^n} & \frac{1}{2} \vb x^\T \vb A \vb x \\
\mathrm{s.t.} & \vb x \vdot \vb x - 1 = 0
\end{array}
\end{equation}
yields the minimum eigenvalue-eigenvector pair. We ran the LFPSQP algorithm with the projection retraction ($\mu^0 = 0.01$), an Armijo line search ($\alpha_0 = 1, s = 0.5$), and search directions produced by the truncated/inexact Newton scheme of Algorithm \ref{alg:newton_step}. The tolerance for the constraint was set to $\epsilon_c = 10^{-6}$. The initial guess, $\vb x^0$, was set to a random vector on the unit sphere, $S^{n-1}$. Let $\vb v$ denote the true eigenvector associated with the minimum eigenvalue. First, for the matrix $\vb A = \diag((n,n-1,\ldots,1))$ where $n=100$, LFPSQP terminated with a function improvement tolerance between steps of $2.3 \times 10^{-10}$ and a projected gradient norm of $3.6 \times 10^{-7}$ after 8 outer iterations. In each outer iteration, no more than 5 total conjugate gradient ``inner'' iterations were required for each projection back onto the constraint manifold (without exploiting the obvious structure of $S^{n-1}$). Figure \ref{fig:rayleigh_diagonal} shows the progress of the algorithm as a function of the number of outer iterations and the number of cumulative matrix-vector multiplies required for function evaluations, gradient evaluations, and Hessian-vector evaluations used throughout the course of the algorithm and within the inexact Newton search direction scheme. As one can see, more matrix-vector multiplies were required near termination, which makes sense considering the inexact Newton scheme allows for less residual error as the iterates become closer to a local optimum. It is also worth mentioning that for the first 3 iterations with the particular random initial guess employed, a negative direction of curvature was detected by the projected conjugate gradient algorithm, but this clearly did not pose a significant issue to the algorithm.

\begin{figure}[h]
\centering
\includegraphics[width=\textwidth]{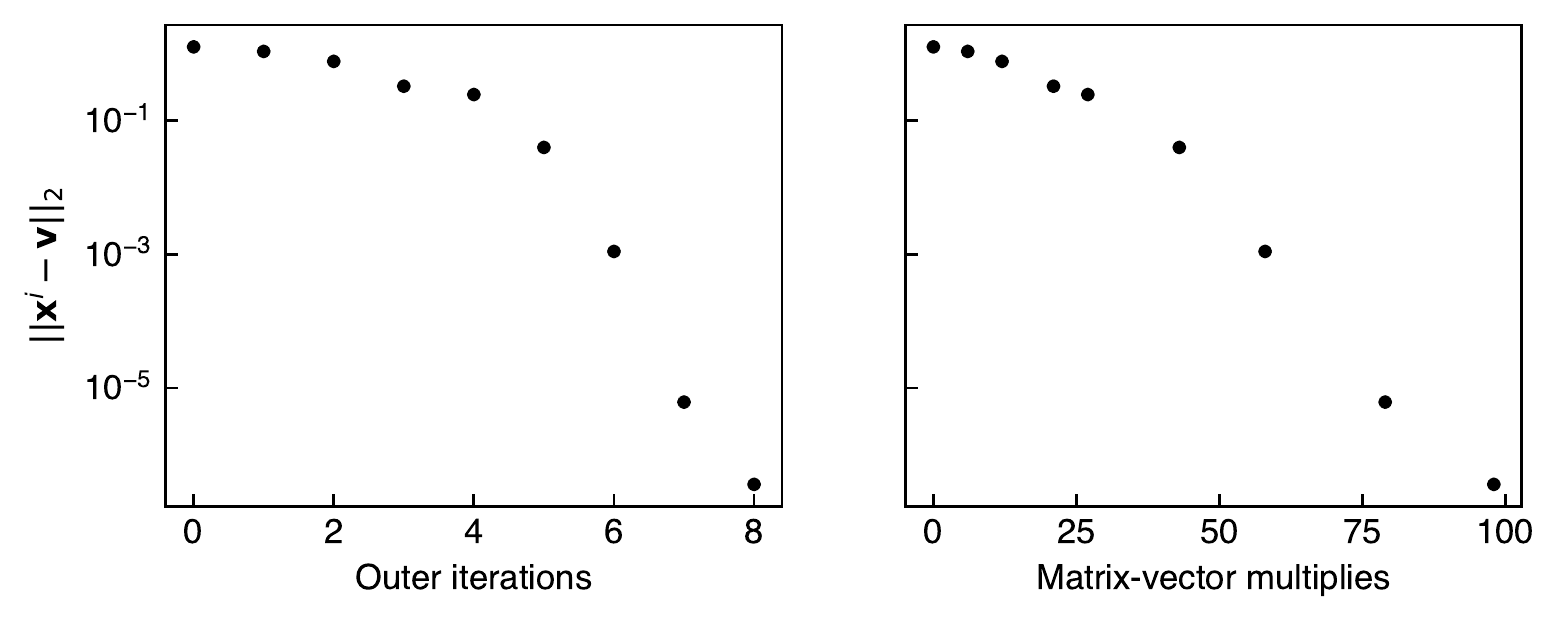}
\caption{The progress of the LFPSQP iterate for the Rayleigh quotient problem with the matrix $\vb A = \diag((n,n-1,\ldots,1))$ where $n=100$ as a function of the number of iterations and as a function of the cumulative number of matrix-vector multiplies with $\vb A$.}
\label{fig:rayleigh_diagonal}
\end{figure}

Figure \ref{fig:rayleigh_random} shows similar results for LFPSQP using a sparse, symmetric matrix $\vb A \in \R^{2000 \times 2000}$ of density approximately 0.02 given by $\vb A = \vb B + \vb B^\T$, where $\vb B$ is a sparse matrix of density 0.01 with nonzero entries drawn from a standard unit normal distribution. LFPSQP terminated after 13 outer iterations with a function improvement tolerance between steps of $6.2 \times 10^{-7}$ and a projected gradient norm of $5.4 \times 10^{-8}$.

\begin{figure}[h]
\centering
\includegraphics[width=\textwidth]{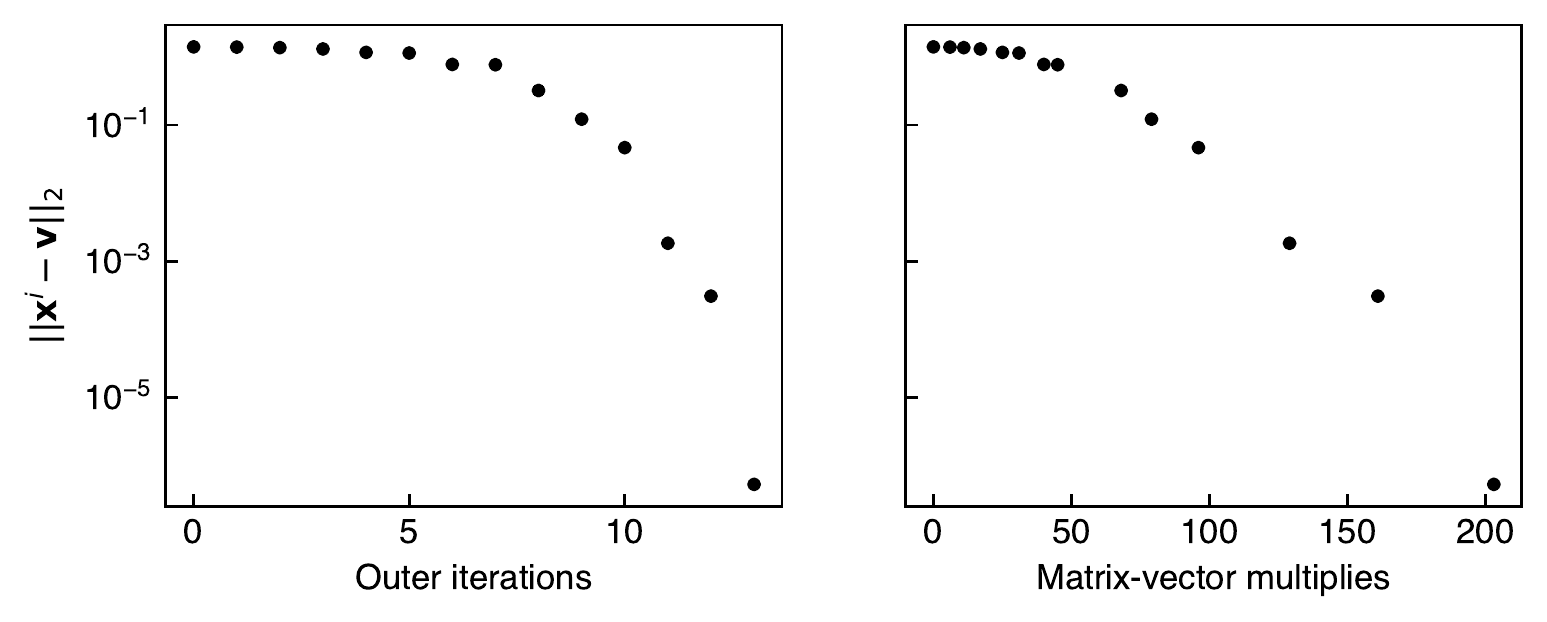}
\caption{The progress of the LFPSQP iterate for the Rayleigh quotient problem with a $2000 \times 2000$ random, symmetric, sparse matrix $\vb A$ as a function of the number of iterations and as a function of the cumulative number of matrix-vector multiplies with $\vb A$.}
\label{fig:rayleigh_random}
\end{figure}

\begin{figure}[h]
\centering
\includegraphics[width=\textwidth]{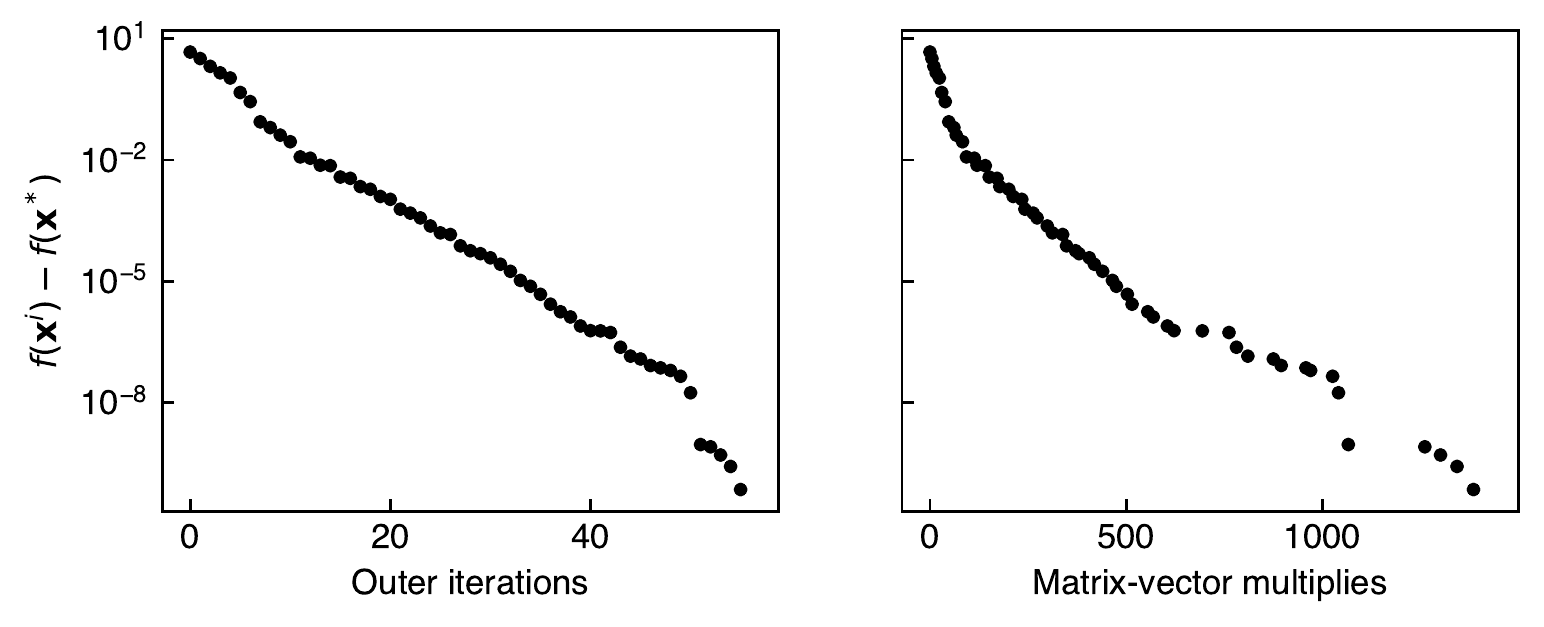}
\caption{The progress of the objective function for the ``positive-orthant'' Rayleigh quotient problem with a $2000 \times 2000$ random, symmetric, sparse matrix $\vb A$ as a function of the number of iterations and as a function of the cumulative number of matrix-vector multiplies with $\vb A$.}
\label{fig:rayleigh_inequality}
\end{figure}

As an example with box constraints, consider the problem of finding the vector that minimizes the Rayleigh quotient with non-negative entries:
\begin{equation}
\begin{array}{rl}
\displaystyle \min_{\vb x \in \R^n} & \frac{1}{2} \vb x^\T \vb A \vb x \\
\mathrm{s.t.} & \vb x \vdot \vb x - 1 = 0 \\
& \vb x \geq \bm 0.
\end{array}
\end{equation}
Geometrically, this problem can be thought of as a minimization of the Rayleigh quotient on the part of the unit sphere that lies in the positive orthant of $\R^n$. Using the same $2000 \times 2000$ random matrix $\vb A$ above, Figure \ref{fig:rayleigh_inequality} shows the progress of the objective function compared to the value at the optimum, which was estimated using the value at termination. The initial guess was set to a positive random vector on the unit sphere, and the tolerance for the constraints was set to a very small value of $\epsilon_c = 10^{-8}$. For certain steps exhibiting significant backtracking, a few hundred cumulative iterations of the ``inner'' projection retraction conjugate gradient routine was required among all calls from the backtracking line search routine. However, given the simplicity of the unit-norm constraint, this did not pose a significant computational burden, as the LFPSQP algorithm took less than 0.5 seconds to execute in Julia. LFPSQP terminated after 56 outer iterations with a function improvement tolerance between steps of $7.1 \times 10^{-11}$ and a projected gradient norm of $1.3 \times 10^{-6}$. The resulting local minimum that was found featured 1008 entries that were greater than $10^{-8}$ in magnitude, meaning about half of the box constraints were active. It is also worth noting that fewer than 2000 matrix-vector multiplies were required to solve this ``positive-orthant'' variant of the Rayleigh-quotient problem. Compared to an interior point method which may solve a system of equations involving $\vb A$ exactly at each step, this number of cumulative matrix-vector multiplies potentially represents a significant saving of computational expense.

\subsection{Simple inequality constraint}
Consider a linear objective function with a feasible set equal to the solid unit sphere:
\begin{equation}
\begin{array}{rl}
\displaystyle \min_{\vb x \in \R^n} & \vb c \vdot \vb x \\
\mathrm{s.t.} & \vb x \vdot \vb x \leq 1 \\
\end{array}
\label{eq:circle_inequality}
\end{equation}
The analytical solution is given by $\vb x^* = -\vb c / \norm{\vb c}_2$. For $n=1000$, a random coefficient vector, $\vb c$ was generated, and LFPSQP was run with an initial guess of $\vb x^0 = \bm 0$ using the quasi-Newton retraction detailed in section 4.2.1. Figure \ref{fig:circle_inequality} shows the Euclidean distance of each LFPSQP iterate from the analytical solution. At each iterate, a maximum of 4 ``inner'' quasi-Newton retraction iterations were required to retract back onto the (augmented) constraint manifold with a tolerance of $\epsilon_c = 10^{-6}$. Overall, after 7 iterations, LFPSQP terminated with a projected gradient norm of $8.6 \times 10^{-9}$. 

\begin{figure}[h]
\centering
\includegraphics{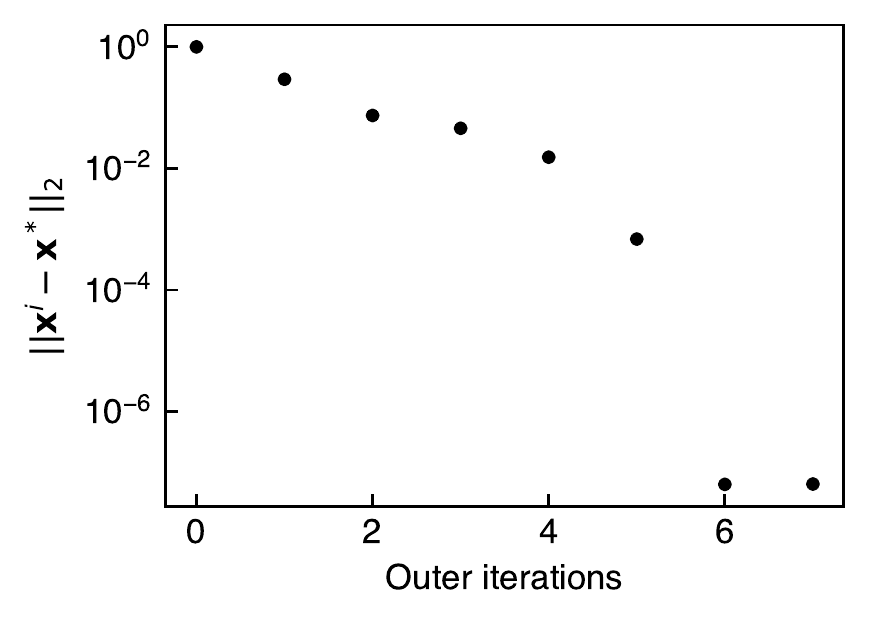}
\caption{The progress of the LFPSQP iterate for problem \ref{eq:circle_inequality} with $n=1000$ compared to the analytical solution, $\vb x^*$.}
\label{fig:circle_inequality}
\end{figure}

\subsection{Degenerate constraints}

Consider the following somewhat pathological, \textit{non-manifold} example:
\begin{equation}
\begin{array}{rl}
\displaystyle \min_{\vb x \in \R^2} & -x_1 - x_2/2\\
\mathrm{s.t.} & x_2 \leq -(x_1 +1)(x_1 - 1)x_1^2 \\
& x_2 \geq (x_1 +1)(x_1 - 1)x_1^2. \\
\end{array}
\label{eq:degenerate_quartic}
\end{equation}
The inequality constraints, here, represent a figure-eight-shaped region that ``pinches'' in the center. Figure \ref{fig:degenerate_quartic} shows the path taken by LFPSQP using the projection retraction with both inexact Newton proposal directions as well as gradient proposal directions, with $\alpha_0$ set to 0.5 for the latter for the purpose of illustrating the projected gradient flow. Clearly, with projected gradient steps, the algorithm traverses the degenerate ``pinch'' in the center, the algorithm for the projection retraction does not falter, and feasibility is maintained throughout. Interestingly, the first inexact Newton step avoids the ``pinch'' altogether with a proposal already lying in the right half of the feasible region. The fact that the iterates ``stick'' to the top boundary of the feasible region in Figure \ref{fig:degenerate_quartic} and do not approach the optimum in a straight line may seem counterintuitive but is due to the underlying nonlinearity of the higher-dimensional \textit{augmented} feasible set that is inherently not visualized in Figure \ref{fig:degenerate_quartic}.

\begin{figure}[h]
\centering
\includegraphics{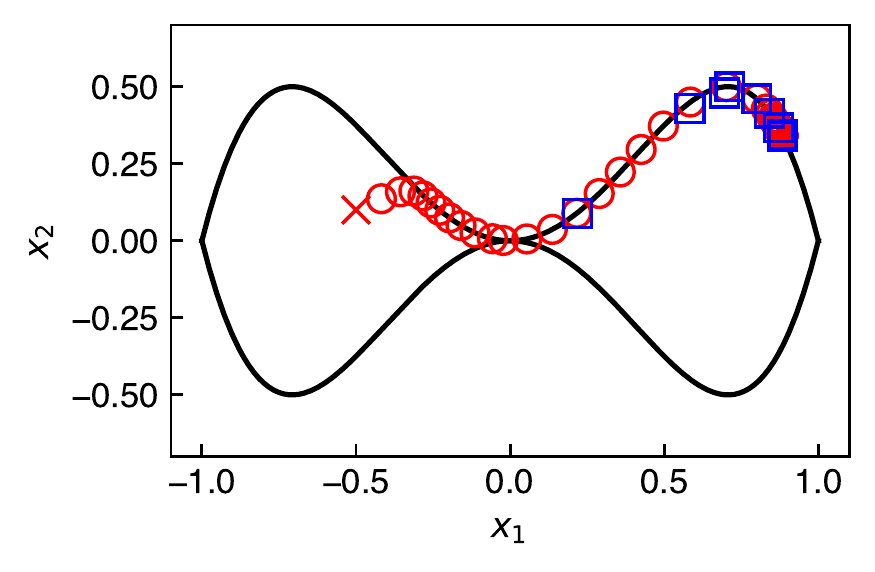}
\caption{LFPSQP iterates using projected gradient propsoals with $\alpha_0 = 0.5$ (red circles) and inexact Newton proposals (blue squares) at each step ($\vb x^0$ is shown with a red ``x'') in the course of solving problem \ref{eq:degenerate_quartic}. The feasible region is outlined in black.}
\label{fig:degenerate_quartic}
\end{figure}

A similarly degenerate example is the following problem:
\begin{equation}
\begin{array}{rl}
\displaystyle \min_{\vb x \in \R^2} & -x_1 - x_2/2\\
\mathrm{s.t.} & \cos^2(x_1) + x_2^2 \leq 1 \\
& -2 \leq x_1 \leq 2
\end{array}
\label{eq:degenerate_cos}
\end{equation}
with analogous LFPSQP results presented in Figure \ref{fig:degenerate_cos}. Again, the LFPSQP iterates for both gradient proposal and inexact Newton proposal directions are able to traverse the degenerate ``pinch'' point to the optimum, despite the different nature of the degeneracy in problem \ref{eq:degenerate_cos} compared to problem \ref{eq:degenerate_quartic} (i.e., the Jacobian of the active constraints at the origin is rank zero for problem  \ref{eq:degenerate_cos} as opposed to rank one for problem \ref{eq:degenerate_quartic}).

\begin{figure}[h]
\centering
\includegraphics{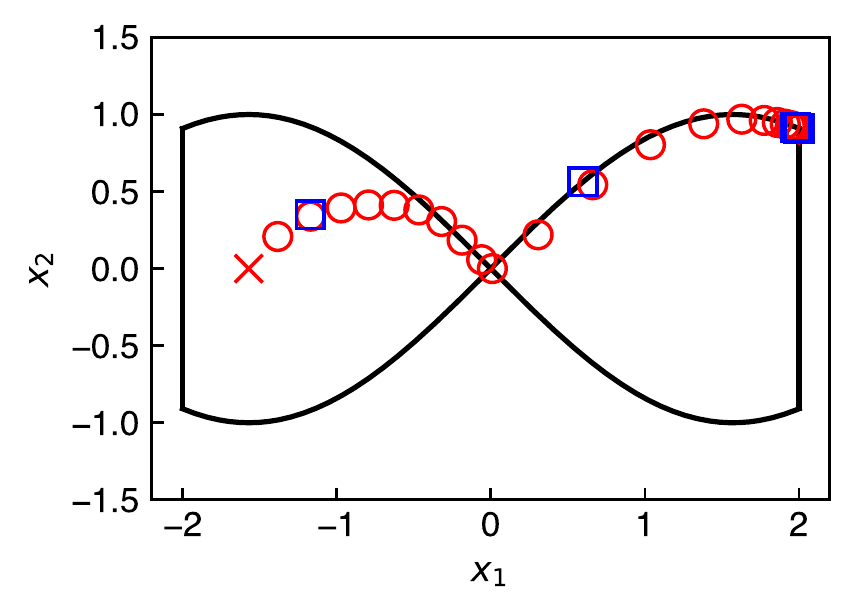}
\caption{LFPSQP iterates using projected gradient propsoals with $\alpha_0 = 0.5$ (red circles) and inexact Newton proposals (blue squares) at each step ($\vb x^0$ is shown with a red ``x'') in the course of solving problem \ref{eq:degenerate_cos}. The feasible region is outlined in black.}
\label{fig:degenerate_cos}
\end{figure}

When applied to both of these problems, interior point methods may get trapped at the degenerate ``pinch`` point before reaching the optimum at the right side of the feasible regions. Indeed, experiments using MATLAB's \texttt{fmincon} interior point solver with default settings and termination criteria indicate this is the case for certain (but not all) initial points. Although challenging to demonstrate theoretically, the above results seem to indicate empirically that LFPSQP may be robust enough to handle certain problems featuring non-manifold/degenerate constraints or other pathological features that may violate the assumptions imposed on the objective and constraint functions throughout the work.

\section{Discussion}

Future work that takes advantage of sparsity in the constraint Jacobian, which is often the case in practice, would obviate the need for a factorization. While factorization via the SVD likely enhances the numerical stability of the method, it is obviously slow compared to iterative methods that excel when faced with problems involving sparse matrices. Such an iterative method (e.g., conjugate gradient of the normal equations) to handle projection steps in all of the subroutines (as opposed to projection with $\vb U$) could potentially prove useful for certain structured and sparse problems, potentially enabling very large-scale problems to be solved efficiently.

It was also implicitly assumed that initial iterates, $\vb x^0$ are feasible. Although not discussed, without a feasible initial iterate, one could envision performing a ``phase 1'' optimization in order to find a feasible point or demonstrate that the problem is infeasible, as is standard in the linear programming literature \cite{bertsimas1997}.

\section{Conclusions}
In this work, a locally feasibly projected sequential quadratic programming (LFPSQP) algorithm was developed to perform feasible optimization with nonlinear objective functions on arbitrary, implicitly defined nonlinear constraint manifolds. Each outer iteration step of the algorithm is largely dominated by an $\order{nm^2}$ singular value decomposition (where $n$ is the number of independent variables and $m$ is the number of nonlinear constraints imposed) that enables numerically stable projection operations in various subroutines and allows robust constraint degeneracy detection. Drawing on the Riemannian optimization literature, computationally efficient retractions were constructed with inner steps requiring $\order{nm}$ flops. To handle inequality constraints, an augmented problem with a higher-dimensional constraint manifold was constructed, and retractions that take advantage of the structure of this augmented manifold were constructed. Importantly, the imposition of box constraints in addition to nonlinear constraints does not contribute to the asymptotic complexity of the ``inner'' or ``outer'' steps via the use of a particular decomposition of the full problem Jacobian.

A package, LFPSQP.jl \cite{silmorelfpsqp}, was created in the Julia language and takes full advantage of Julia's automatic differentiation tools. In particular, mixed-mode automatic differentiation was used to avoid the explicit construction of the Hessian of the Lagrangian and is called when calculating a second-order search direction with an inexact Newton, projected conjugate gradient \cite{gould2001, nocedal2006} subroutine.

Future work examining the practicality of LFPSQP on large-scale problems as well as the incorporation of iterative routines to perform projection steps without factorization would likely be fruitful. Furthermore, improvements to the efficiency of the projection retraction inner problem could be interesting to explore.

\section*{Acknowledgments}
The authors would like to thank Paul Barton for helpful discussions.

\bibliographystyle{siam}
\bibliography{extracted.bib}

\end{document}